\def\qed{\hfill {\hbox{${\vcenter{\vbox{               
   \hrule height 0.4pt\hbox{\vrule width 0.4pt height 6pt
   \kern5pt\vrule width 0.4pt}\hrule height 0.4pt}}}$}}}
\newtheorem{theorem}{Theorem}
\newtheorem{definition}{Definition}
\newtheorem{lemma}[theorem]{Lemma}
\newtheorem{proposition}[theorem]{Proposition}
\DeclareMathOperator{\ord}{ord}
\newenvironment{mpmatrix}{\begin{medsize}\begin{pmatrix}}%
{\end{pmatrix}\end{medsize}}%
\title{Petal Projections, Knot Colorings \& Determinants}
\author{Allison Henrich}
\address{Department of Mathematics, Seattle University, 901 12th Avenue, Seattle, WA 98122}
\email{henricha@seattleu.edu}
\author{Robin Truax}
\address{Stanford University, Stanford, CA 94305}
\email{truax@stanford.edu}
\keywords{petal projection, knot determinant, colorability}
\subjclass[2010]{57M27}
\begin{document}
\maketitle

\begin{abstract}
An \"{u}bercrossing diagram is a knot diagram with only one crossing that may involve more than two strands of the knot. Such a diagram without any nested loops is called a petal projection. Every knot has a petal projection from which the knot can be recovered using a permutation that represents strand heights. Using this permutation, we give an algorithm that determines the $p$-colorability and the determinants of knots from their petal projections. In particular, we compute the determinants of all prime knots with crossing number less than $10$ from their petal permutations. 
\end{abstract}

\section{Background}
There are many different ways to define a knot. The standard definition of a {\em knot}, which can be found in \cite{adams0} or \cite{allison-book-:D}, is an embedding of a closed curve in three-dimensional space, $K: S^1 \rightarrow \mathbb{R}^3$. Less formally, we can think of a knot as a knotted circle in 3-space. Though knots exist in three dimensions, we often picture them via 2-dimensional representations called \textit{knot diagrams}. In a knot diagram, crossings involve two strands of the knot, an overstrand and an understrand. The relative height of the two strands at a crossing is conveyed by putting a small break in one strand (to indicate the understrand) while the other strand continues over the crossing, unbroken (this is the overstrand). Figure \ref{fig:figure-eight} shows an example of this.\\

\begin{figure}
    \centering
    \includegraphics[width=0.25\textwidth]{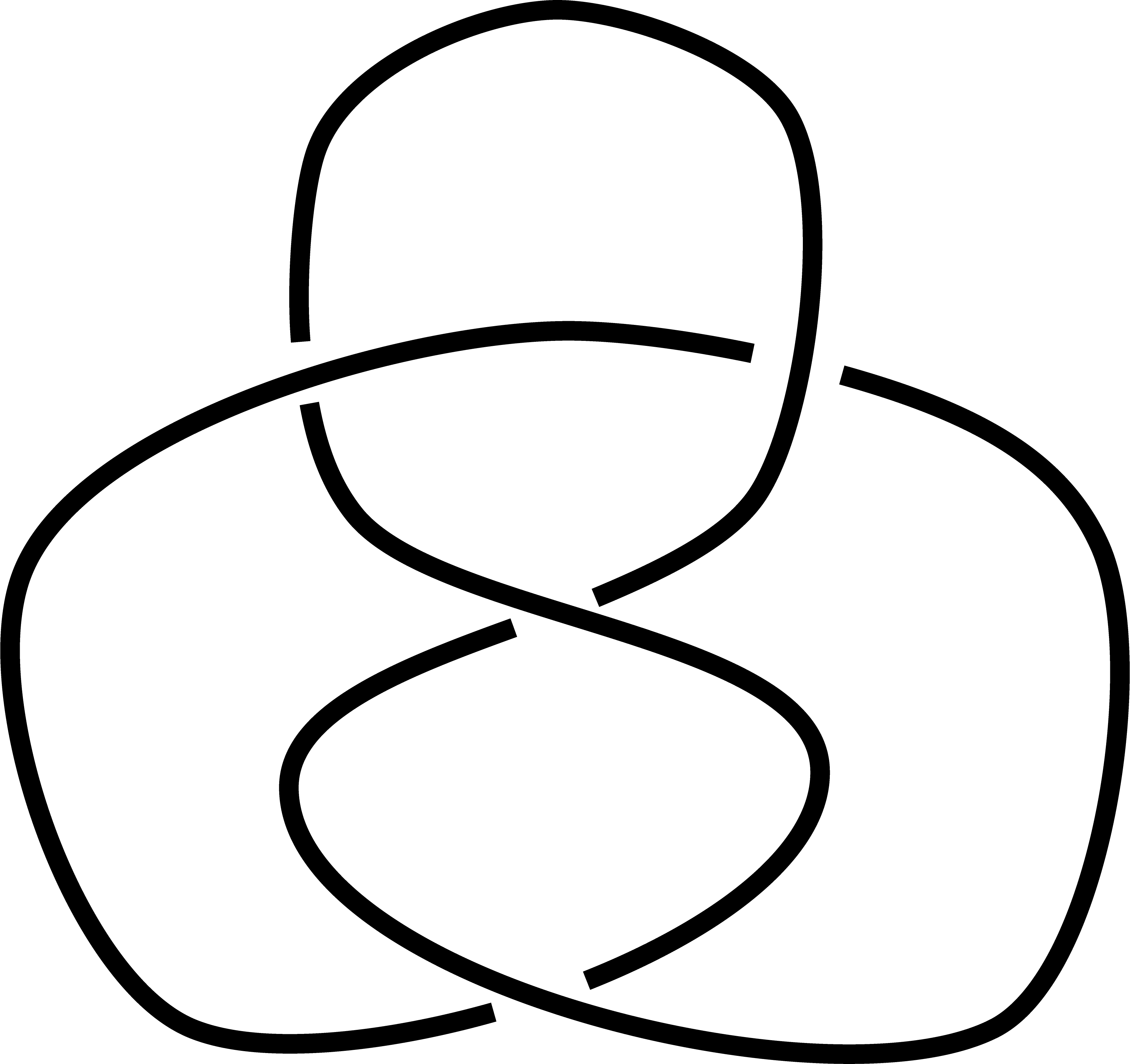}
    \caption{A knot diagram.}
    \label{fig:figure-eight}
\end{figure}

Two knots are defined to be equivalent if one can be continuously deformed (without passing through itself) into the other. In terms of diagrams, two knot diagrams represent equivalent knots if and only if they can be related by a sequence of Reidemeister moves and planar isotopies (i.e. ``wiggling"). In Figure \ref{fig:r-moves}, we illustrate the three Reidemeister moves. To learn more, see \cite{adams0} or \cite{allison-book-:D}. \\ 

\begin{figure}
    \centering
    \includegraphics[width=0.50\textwidth]{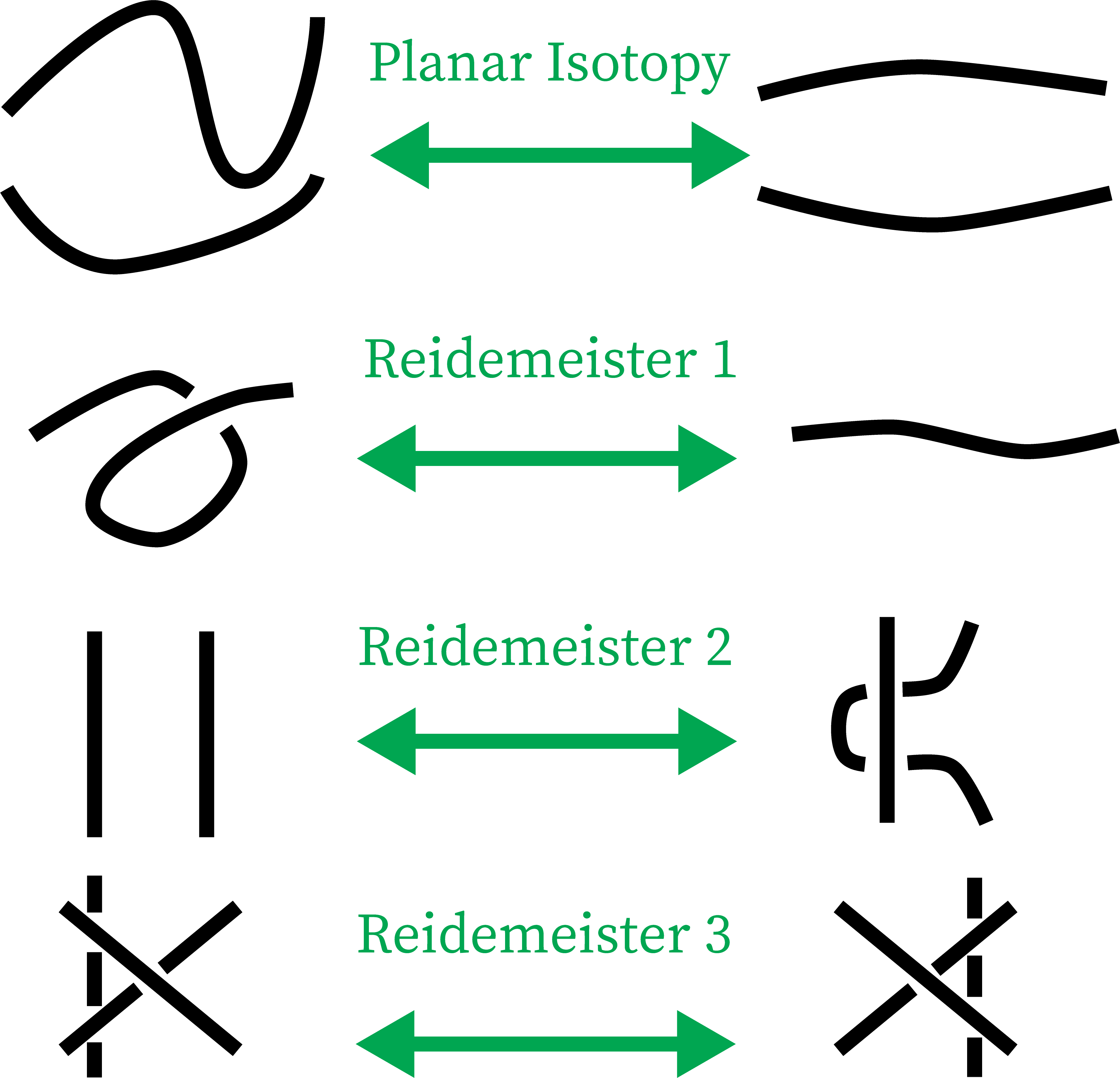}
    \caption{The three Reidemeister moves.}
    \label{fig:r-moves}
\end{figure}

If a knot is equivalent to a simple, crossingless circle, it is called a \textit{trivial knot}, or  the \textit{unknot}. Two examples of the unknot are provided in Figure \ref{fig:unknot}.\\

\begin{figure}
    \centering
    \includegraphics[width=0.25\textwidth]{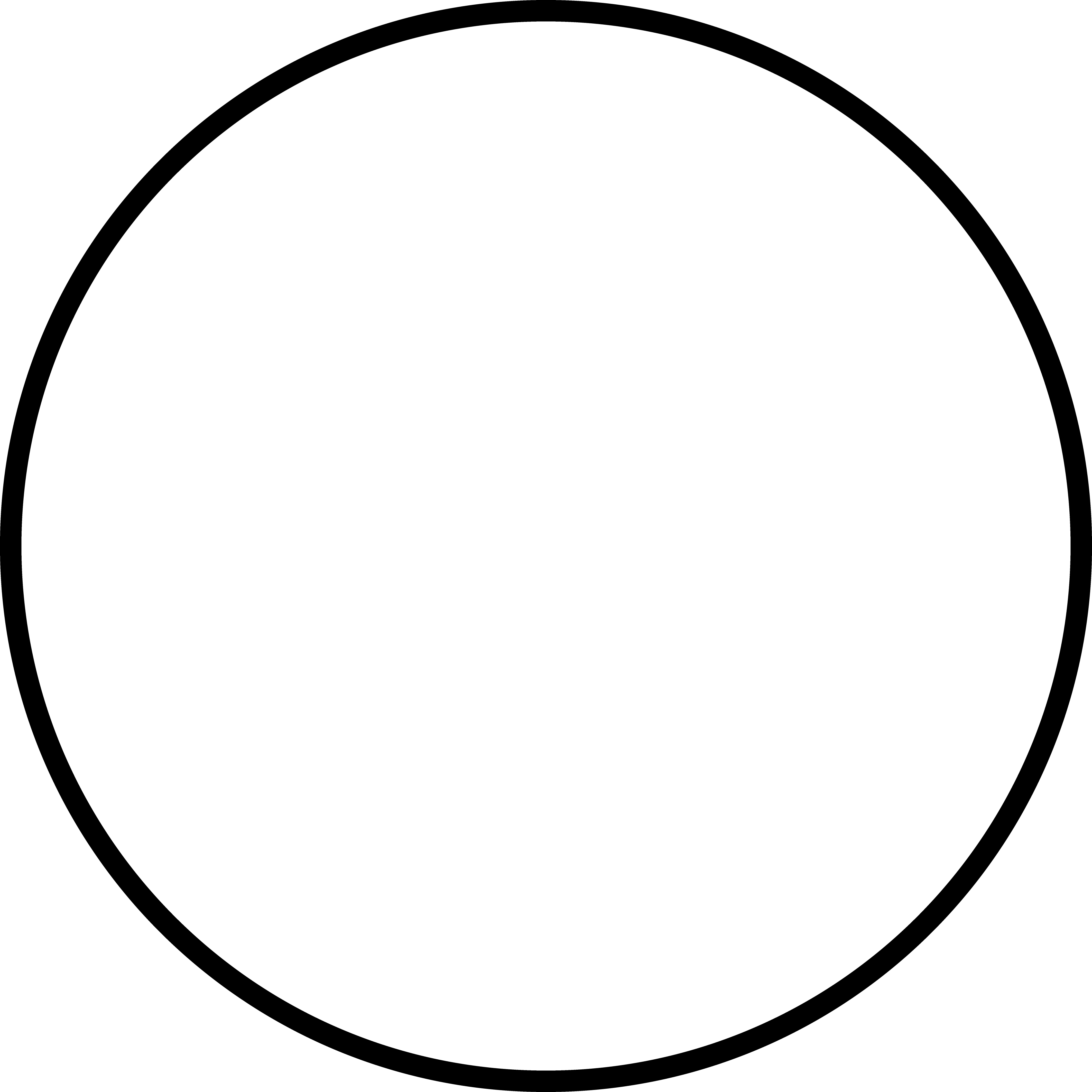}
    \hspace{.7in}
    \includegraphics[width=0.30\textwidth]{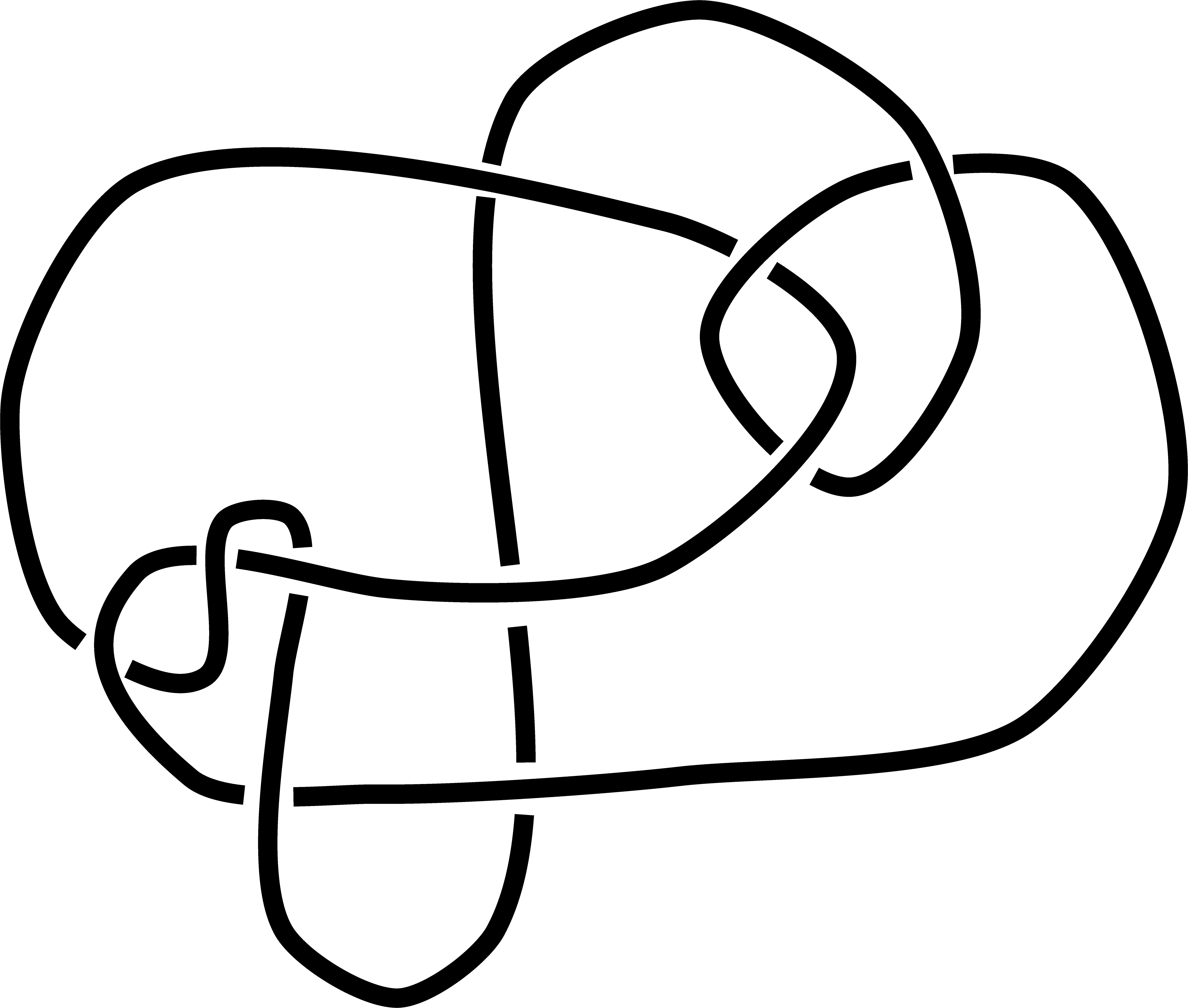}
    \caption{Two diagrams of the unknot.}\label{fig:unknot}
\end{figure}

If a knot cannot be deformed into the unknot, it is said to be \textit{non-trivial}---that is, there's no way to untangle it without passing it through itself. One of the central problems of knot theory is to find ways to show that a knot is non-trivial. To do this, much of knot theory is based on finding and understanding \textit{knot invariants}---functions defined on knots that give the equivalent outputs (in the form of numbers, polynomials, etc.) if the two input knots are equivalent. Examples of knot invariants include colorability, the knot determinant, the Jones polynomial, the Alexander polynomial, and Khovanov homology. It is the first two of these invariants that will be the focus of this paper. 

\section{Colorability}
This section relies heavily on an important definition in knot theory: an ``arc." 
\begin{definition}\label{arc}
An {\bf arc} is a segment of a knot in a diagram which begins and ends at adjacent undercrossings (crossings such that the segment passes under the crossing). 
\end{definition}

\begin{figure}
    \centering  
    \includegraphics[width=0.30\textwidth]{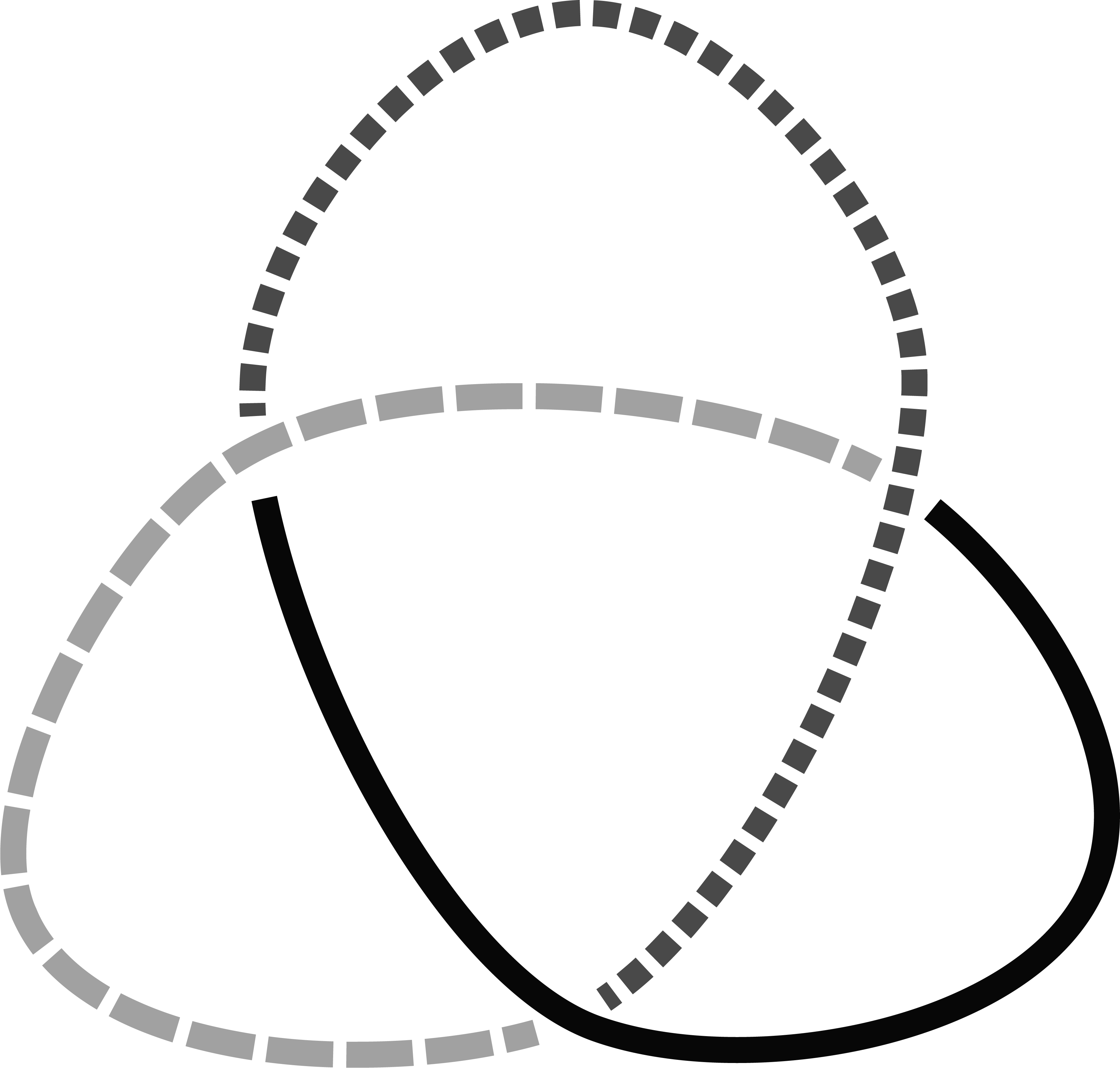}
    \caption{A decomposition of the trefoil into $3$ arcs (dashes used to distinguish different arcs, not actual breaks in the knot).}
    \label{fig:trefoil-arcs}
\end{figure}

We may decompose a knot diagram into a collection of arcs. Figure \ref{fig:trefoil-arcs} demonstrates an example. It's also important to note that there are always the same number of arcs as there are crossings, with the exception of the crossingless diagram of the unknot. We can see this because at every crossing, exactly one of the strands must go under another, thereby starting a new arc. This will become important later, as it lets us construct a matrix from a knot diagram (with rows corresponding to crossings and columns corresponding to arcs) that is guaranteed to be square.

\subsection{Tricolorability}
To understand the general notion of $p$-colorability---one of the knot invariants we are most interested in for the purposes of this paper---we should first describe the specific case when $p=3$: \textit{tricolorability}. Colorability is a well-studied concept in knot theory (see \cite{livingston} for a further description of all the topics covered in this section), and tricolorability is a particularly accessible type of colorability to study.
\begin{definition}\label{tricolorable}
A knot diagram is {\bf tricolorable} when each of the arcs in the knot diagram can be assigned one of three colors such that: 
\begin{itemize}
    \item more than one color is used, and
    \item at any given crossing, all of the arcs are either the same color or all different colors. 
\end{itemize}
\end{definition}

\begin{figure}
    \centering
    \includegraphics[width=0.25\textwidth]{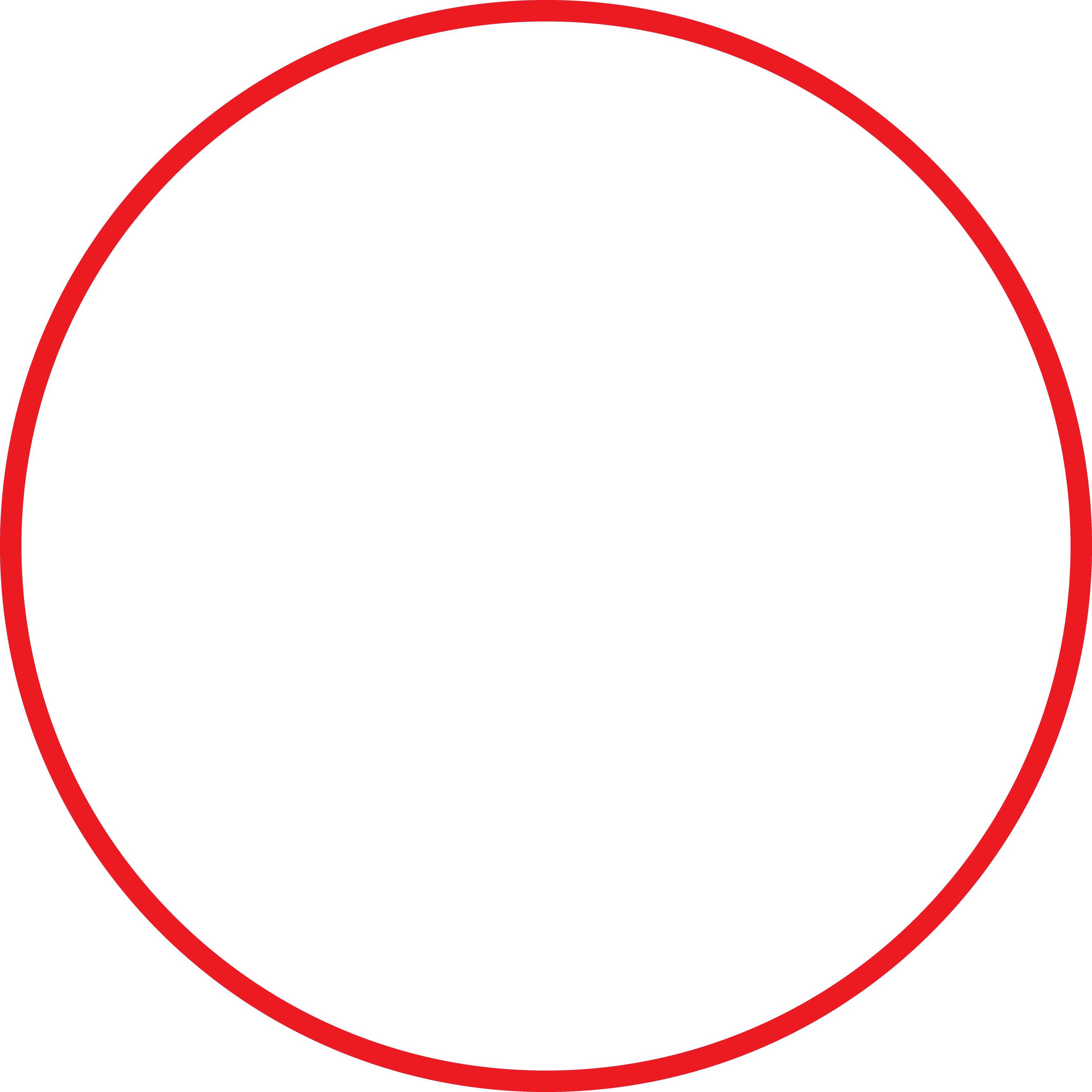}
    \hspace{.7in}
    \includegraphics[width=0.25\textwidth]{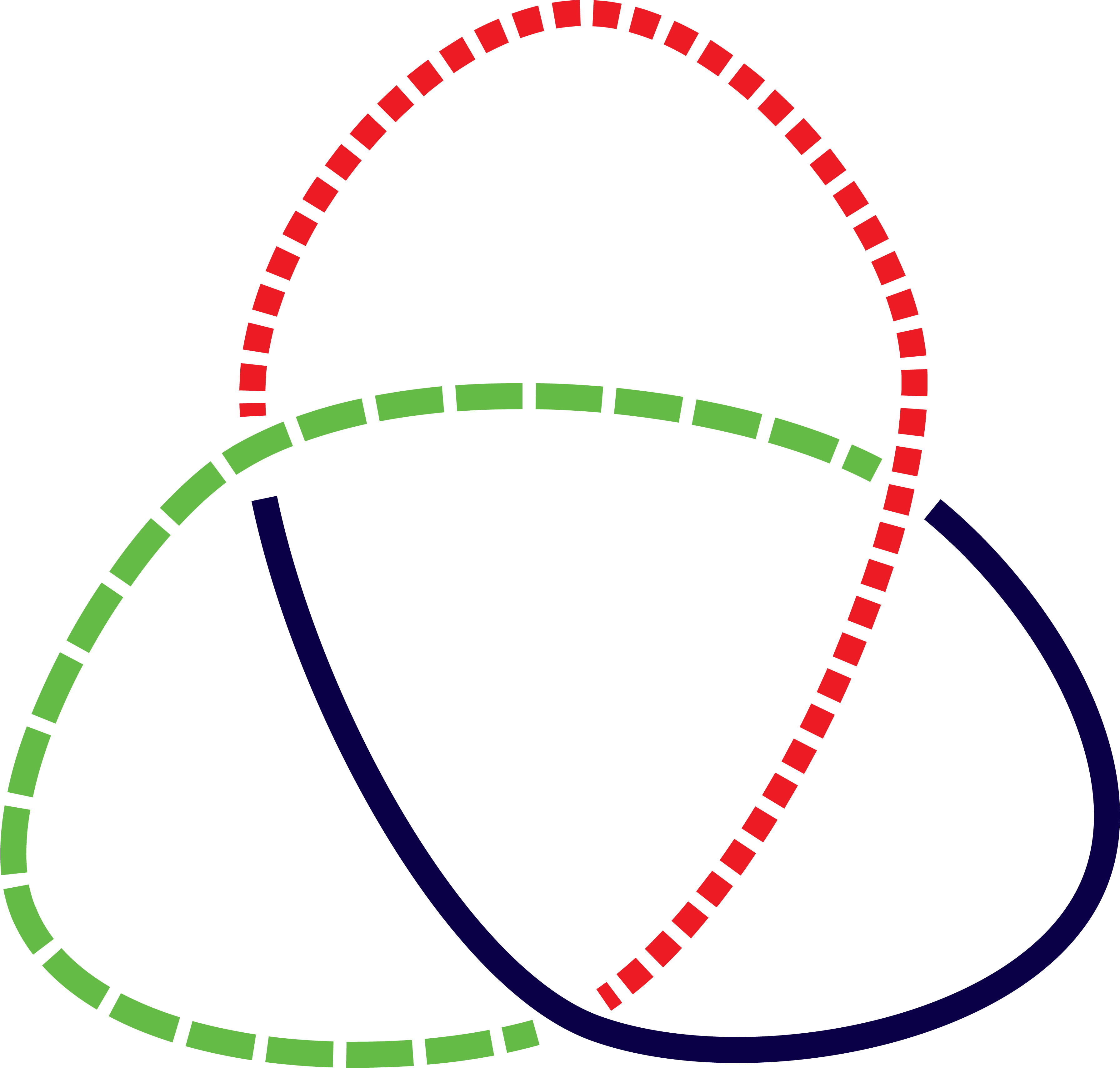}
    \caption{The unknot (left) is not tricolorable, while the trefoil (right) is.}
    \label{fig:coloring}
\end{figure}

For example, the usual diagram of the unknot is not tricolorable, since it has only one arc and therefore can only use one color. In contrast, the usual diagram of trefoil knot is tricolorable; see Figure \ref{fig:coloring}. Not only is it true that tricolorability is a knot invariant -- that is, any two diagrams of a knot are either both tricolorable or both not tricolorable -- but the number of ways to tricolor a knot is also invariant under all Reidemeister moves. Therefore, any two equivalent knot diagrams must be tricolorable in the same number of ways. This means that showing that there are a different number of ways to tricolor two knots is enough to show that they're not equivalent. Note that the converse is \textit{not} true---two knots that are tricolorable in the same number of ways may still be distinct, so the invariant isn't complete. \\

Before we talk about generalizing tricolorability, let us see how determining if a knot is tricolorable can be done by solving a system of linear equations. Suppose we assign a variable to each arc in a knot diagram. At a crossing, the overstrand might be called $a_k$ while the understrands are labeled with $a_i$ and $a_j$. A ``coloring" of these arcs can be viewed as assignments of the numbers $0$, $1$, or $2$ to our variables. This is not just an arbitrary choice of symbols: it turns out that the equation $2a_k-a_i-a_j=0$ holds mod $3$ if and only if the three ``colors" (or numbers) are all the same or all different. Recall that arithmetic mod $3$ means that any number $n$ is considered equivalent to any other number with the same remainder as $n$ when divided by $3$. For example, $1 \equiv 4 \equiv 7$ (mod $3$) and $-1 \equiv 2 \equiv 14$ (mod $3$). Another notation we can use to describe a number $n$ and its equivalents mod 3 is $\overline{n}$. The set of these equivalence classes is called $\mathbb{Z}/3\mathbb{Z}=\{\overline{0},\overline{1},\overline{2}\}$ and has a ring structure. This leads us to recall the following classical result:
\begin{theorem}\label{tricolorable-alternate}
A knot $K$ is {\bf tricolorable} if and only if for any diagram of $K$, the arcs of the diagram (labeled with the variables $a_1, a_2, a_3, ..., a_n$) can be assigned values from the ring $\mathbb{Z}/3\mathbb{Z}=\{\overline{0},\overline{1},\overline{2}\}$ (the integers mod $3$) such that at least two elements are used, and at every crossing, we have that $$2a_k - a_i - a_j \equiv 0 \pmod{3}$$ where $a_k$ is the overstrand label and $a_i$ and $a_j$ correspond to the understrands at the crossing.
\end{theorem}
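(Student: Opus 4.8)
The plan is to reduce the theorem to a purely local statement about a single crossing. Both the combinatorial definition of tricolorability (Definition~\ref{tricolorable}) and the proposed algebraic characterization impose exactly one condition per crossing, together with the single global requirement that more than one color be used. Identifying the colors $0,1,2$ with the ring elements $\overline{0},\overline{1},\overline{2}\in\mathbb{Z}/3\mathbb{Z}$, the clause ``at least two elements are used'' matches verbatim the clause ``more than one color is used'' in Definition~\ref{tricolorable}. Hence the entire content of the theorem is the claim that, at a crossing with overstrand label $a_k$ and understrand labels $a_i,a_j$, the condition that $a_i,a_j,a_k$ are all equal or all distinct is equivalent to $2a_k - a_i - a_j \equiv 0 \pmod{3}$.

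First I would simplify the algebraic condition. Since $2 \equiv -1 \pmod{3}$, the equation $2a_k - a_i - a_j \equiv 0$ is equivalent to the symmetric condition $a_i + a_j + a_k \equiv 0 \pmod{3}$. This symmetrization is convenient, because the coloring rule ``all the same or all different'' does not distinguish the overstrand from the understrands. I would then prove the equivalence of ``all equal or all distinct'' with ``$a_i+a_j+a_k \equiv 0$'' by a short case check in $\{0,1,2\}$. For the forward direction: if all three labels equal some $c$, the sum is $3c \equiv 0$; if all three are distinct, then since only three values are available they must be exactly $\{0,1,2\}$, whose sum is $3 \equiv 0$. For the converse, suppose the sum vanishes but the labels are not all distinct, so at least two coincide; writing two of them as $c$ and the third as $d$ gives $2c + d \equiv 0$, whence $d \equiv -2c \equiv c \pmod{3}$, forcing $d = c$ and hence all three equal.

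Finally, I would assemble the global statement: an assignment of colors to the arcs satisfies Definition~\ref{tricolorable} at every crossing if and only if the corresponding assignment of elements of $\mathbb{Z}/3\mathbb{Z}$ satisfies $2a_k - a_i - a_j \equiv 0 \pmod{3}$ at every crossing, and the two nondegeneracy conditions coincide. Since this argument proceeds crossing-by-crossing on a fixed diagram, it applies to any chosen diagram of $K$, which justifies the phrase ``for any diagram'' in the statement. The only genuinely delicate point is the observation that in the ``all distinct'' case the three labels must exhaust $\{0,1,2\}$; this is precisely where having exactly three colors is essential, and it is what makes the correspondence specific to $p=3$. I expect everything else to be routine modular arithmetic.
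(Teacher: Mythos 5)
Your proposal is correct, but note that the paper itself offers no proof at all: Theorem~\ref{tricolorable-alternate} is simply recalled as a classical result (with the section's material attributed to Livingston's book), so there is no ``paper approach'' to compare against. Your argument supplies exactly the standard verification that the paper leaves implicit, and it is sound: the symmetrization step (using $2 \equiv -1 \pmod{3}$ to rewrite $2a_k - a_i - a_j \equiv 0$ as $a_i + a_j + a_k \equiv 0$) is valid, the forward case check (all equal gives $3c \equiv 0$; all distinct forces the labels to exhaust $\{0,1,2\}$, summing to $3 \equiv 0$) is complete, and the converse correctly shows that a vanishing sum with two coinciding labels forces $d \equiv -2c \equiv c$, i.e.\ all three equal. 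You also correctly identify the one place where $p=3$ is essential, namely that three pairwise-distinct labels must use every element of $\mathbb{Z}/3\mathbb{Z}$; this is precisely why the analogous ``all same or all different'' phrasing does not survive to general $p$-colorability, where the paper instead takes the linear relation itself as the definition (Definition~\ref{def:p-colorable}). The only caveat worth adding is that your crossing-by-crossing argument establishes the equivalence diagram by diagram; the passage between ``$K$ is tricolorable'' and ``for any diagram of $K$'' quietly relies on tricolorability being invariant under Reidemeister moves, a fact the paper asserts separately and which your proof should cite rather than absorb silently.
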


Note that, in the theorem above, it is possible for $a_k$, $a_i$, and $a_j$ to not all be distinct, as is the case for a reducible crossing. This case does not pose a problem, since assigning the same ``color" to all three variables satisfies the equation.

\subsection{Knot {\em p}-colorability and the knot determinant}\label{sec:color}

Generalizing the notion of tricolorability to $p$-colorability is as simple as replacing ``$\mathbb{Z}/3\mathbb{Z}$" in Theorem \ref{tricolorable-alternate} with $\mathbb{Z}/p\mathbb{Z}$. For completeness, we will state the resulting definition: 

\begin{definition}\label{def:p-colorable}
A knot $K$ is $\bm{p}${\bf -colorable} if and only if for any diagram of $K$, the arcs of the diagram (labeled with the variables $a_1, a_2, a_3, \dots, a_n$) can be assigned values from the ring $\mathbb{Z}/p\mathbb{Z}=\{\overline{0},\overline{1},\overline{2},\dots,\overline{p-1}\}$ such that at least two elements are used, and at every crossing, we have that $$2a_k - a_i - a_j \equiv 0 \pmod{p}$$ where $a_k$ is the overstrand label and $a_i$ and $a_j$ correspond to the understrands at the crossing. 
\end{definition}

As an example, we can $5$-color the figure-eight knot as shown in Figure \ref{fig:colored-figure-eight}.\\

\begin{figure}[ht]
    \centering
    \includegraphics[width=0.35\textwidth]{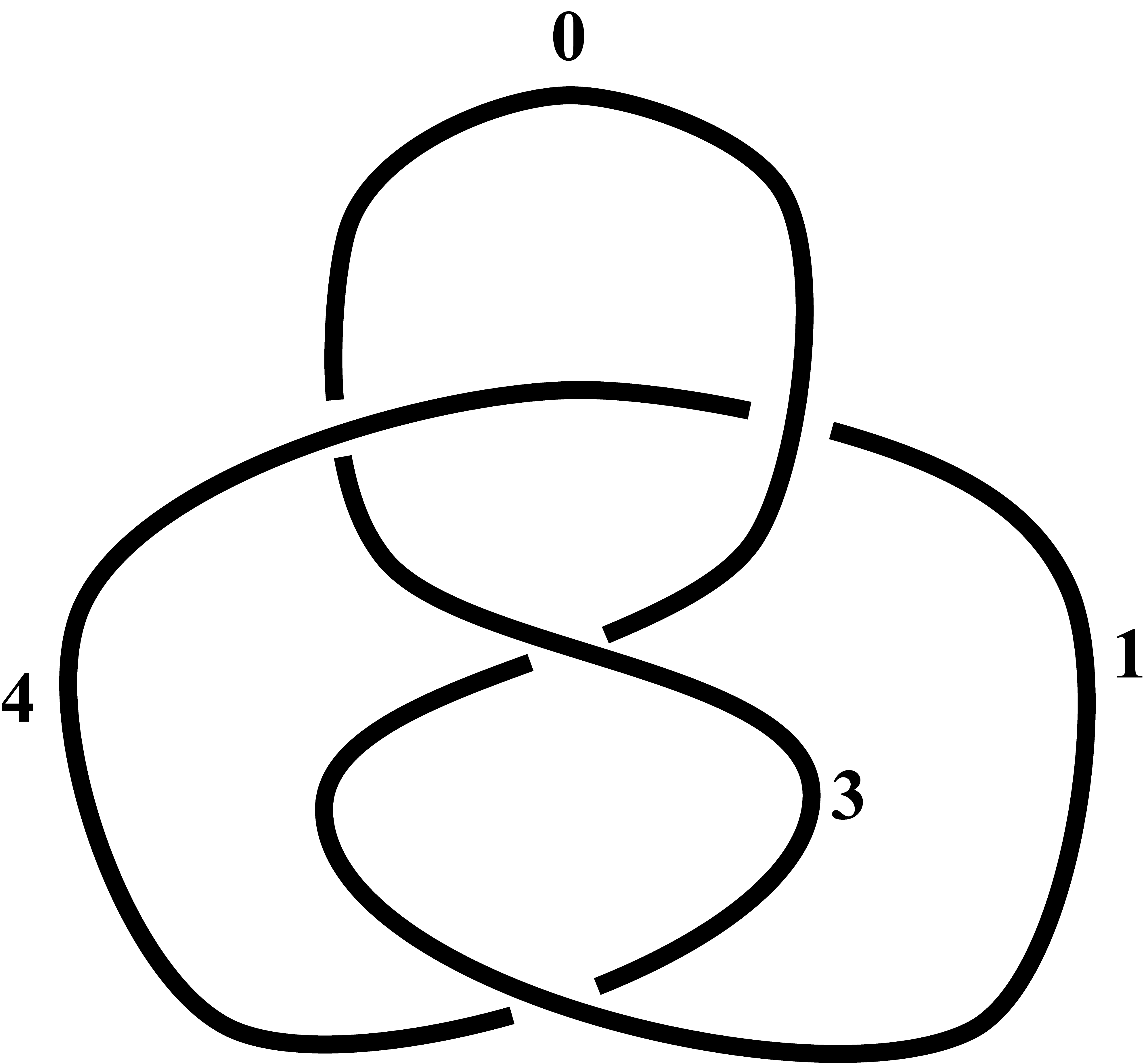}
    \caption{The figure-eight knot.}
    \label{fig:colored-figure-eight}
\end{figure}

Notice that if we index the crossings in a knot diagram using the integers $\{1, \dots, n\}$ and index the arcs in the diagram using the same set, we can create an $n \times n$ matrix $\mathcal{M}$ with elements in $\mathbb{Z}/p\mathbb{Z}$ that represents the system of linear equations generated by the $p$-colorability definition. In this matrix, each row corresponds to a crossing, and each column corresponds to an arc. Since there are always at least $p$ solutions (the trivial ones where we ``color" all arcs the same number), the determinant of this matrix will always be $0$. To get a more useful invariant, we take a first minor of $\mathcal{M}$ (that is, we strike a row and column from our matrix and take the determinant of the resulting submatrix) -- if it is $0$, the knot is $p$-colorable. Otherwise, it is not. This determinant is independent of both the diagram chosen to represent the knot and its labeling. Thus, we see a nice connection between colorability and linear algebra.\\

However, we would like to extend this connection by finding a way to test knots for all primes $p$ at once. To do this, we construct the above matrix as a matrix in $\mathbb{Z}$ and take a first minor to get an invariant called the {\bf determinant} of the knot. By our work in the previous paragraph, a knot is $p$-colorable if and only if $p$ divides the determinant of the knot (since then the minor goes to $0$ mod $p$).\\

A final note for this section: $p$-colorability is only defined for ordinary, double-crossing diagrams of knots, since our definition only allows for three arcs interacting at a single crossing. We are about to venture into a realm of nonstandard projections of knots to which the definitions above don't obviously apply. One of the main goals of this paper is to describe a process for determining colorability from these nonstandard knot representations. Before we can describe this process, we must describe the projections we're interested in studying.\\

\section{Petal projections, split petal projections, and their properties}

\subsection{Petal projections}

In \cite{adams2}, Adams {\em et al.} introduce the concept of a petal projection---a special type of diagram of a knot involving a single crossing, called an {\em \"{u}bercrossing diagram}.

\begin{definition}\label{petal}
A {\bf petal projection} is a knot projection with only one crossing---that has many strands of the knot passing through it---and no nested loops. Each strand passing through the crossing is assigned a number $1,2,..., n$ that indicates the height of the strand, where $1$ is assigned to the topmost strand and $n$ is assigned to the bottom strand. Here, $n$ is called the {\bf petal number} of the petal projection. A petal projection with petal number $n$ is also called an {\bf $n$-petal projection}.
\end{definition}

We see an example of a petal projection of a trefoil in Figure \ref{fig:trefoil-pp}. The trefoil is the only non-trivial knot that requires just $5$ petals to draw.\\

\begin{figure}
    \centering
    \includegraphics[width=0.25\textwidth]{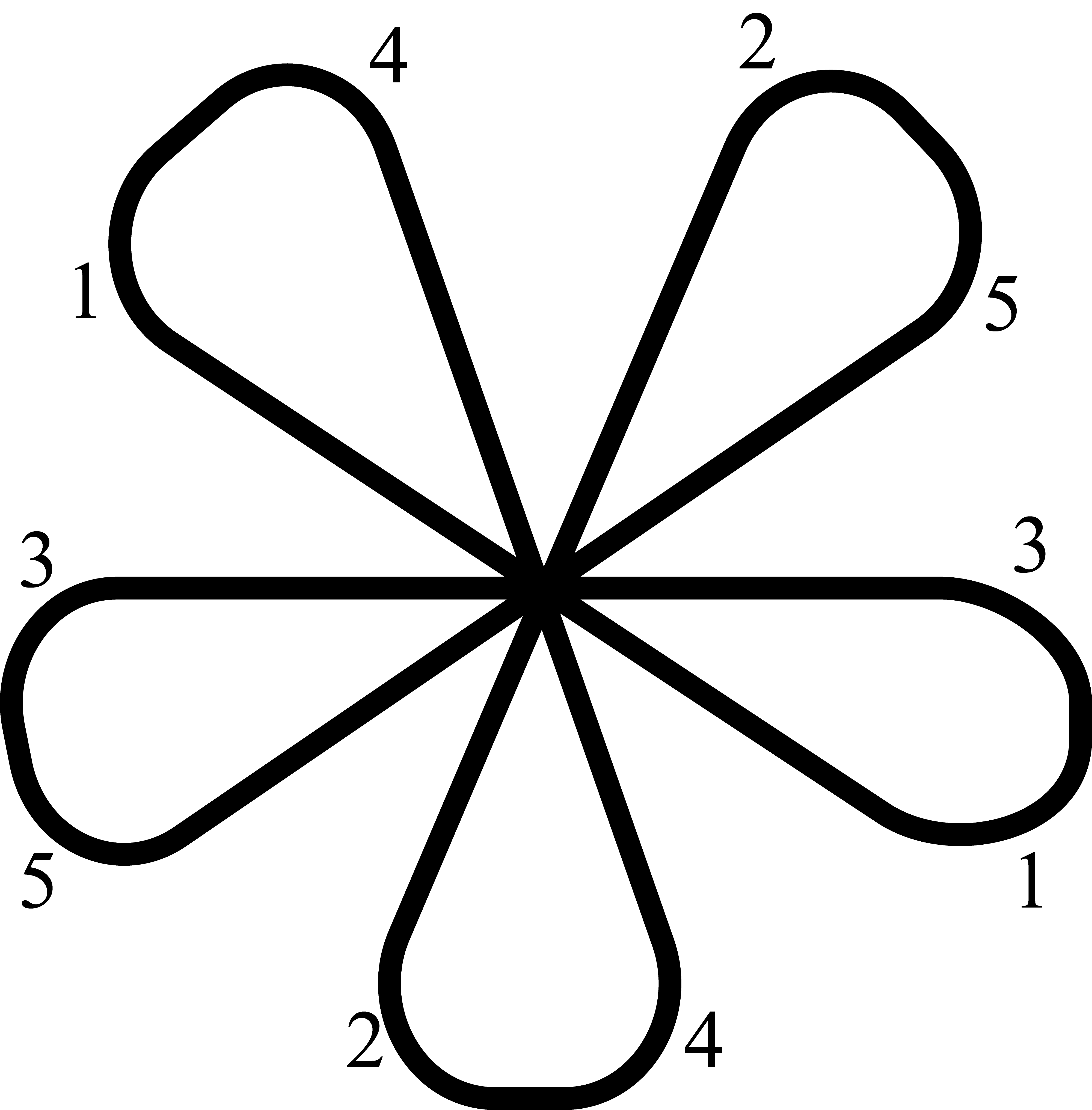}
    \caption{A petal projection of the trefoil}
    \label{fig:trefoil-pp}
\end{figure}

Petal projections are typically described by a permutation that describes the relative ordering of the strands, taken from the projection's labeling. There are many {\em petal permutations} that are equivalent. Since we can begin traveling around the petal projection at any petal, recording the heights of the strands we encounter along the way, we can cyclically permute the permutation that describes the projection. For example, the trefoil in Figure \ref{fig:trefoil-pp} is defined by the permutation $(1, 3, 5, 2, 4)$, but it could also be defined as $(3, 5, 2, 4, 1)$ or $(5, 2, 4, 1, 3)$. For this reason, we follow the convention in \cite{adams2} to always write petal permutations starting with the $1$.\\

We can also imagine turning the petal projection upside down, which doesn't change the knot type or the number of petals in the projection. This is equivalent to making the highest strand the lowest, the second highest the second lowest, and so on. Therefore, the trefoil could also be written: $(5, 3, 1, 4, 2)$ (and then shifted to $(1, 4, 2, 5, 3)$). Notice that while $1$ is at the start of both this representation and the original one, they are still two different petal permutations describing the same knot.\\

Furthermore, since we can always move the bottom strand around to the top of the pile of \"{u}bercrossing strands, we can start recording the order of the strands at any height. So, the permutation $(1, 6, 4, 2, 5, 7,3)$ could also be written as $(2,7,5,3,6,1,4)$ by adding 1 to each entry (mod 7), which can be rewritten as $(1,4, 2, 7, 5, 3, 6)$.\\

Another important property of petal permutations is that if any two consecutive numbers in the permutation are consecutive integers, we may pull the petal with the consecutive integers through the multi-crossing, merging three petals into one. This is possible because, as the two integers on the petal correspond to heights, the petal labeled with two consecutive integers occupies its own vertical slice of the multi-crossing, and therefore can be pulled through without interference. This is difficult to visualize without a diagram, so one is provided in Figure \ref{fig:simplify-pp}. Notice that the petal labeled with 3 and 4 is the one that is removed, so when the heights are relabeled, all of the heights greater than 4 are reduced by 2.\\

\begin{figure}
    \centering
    \includegraphics[width=0.60\textwidth]{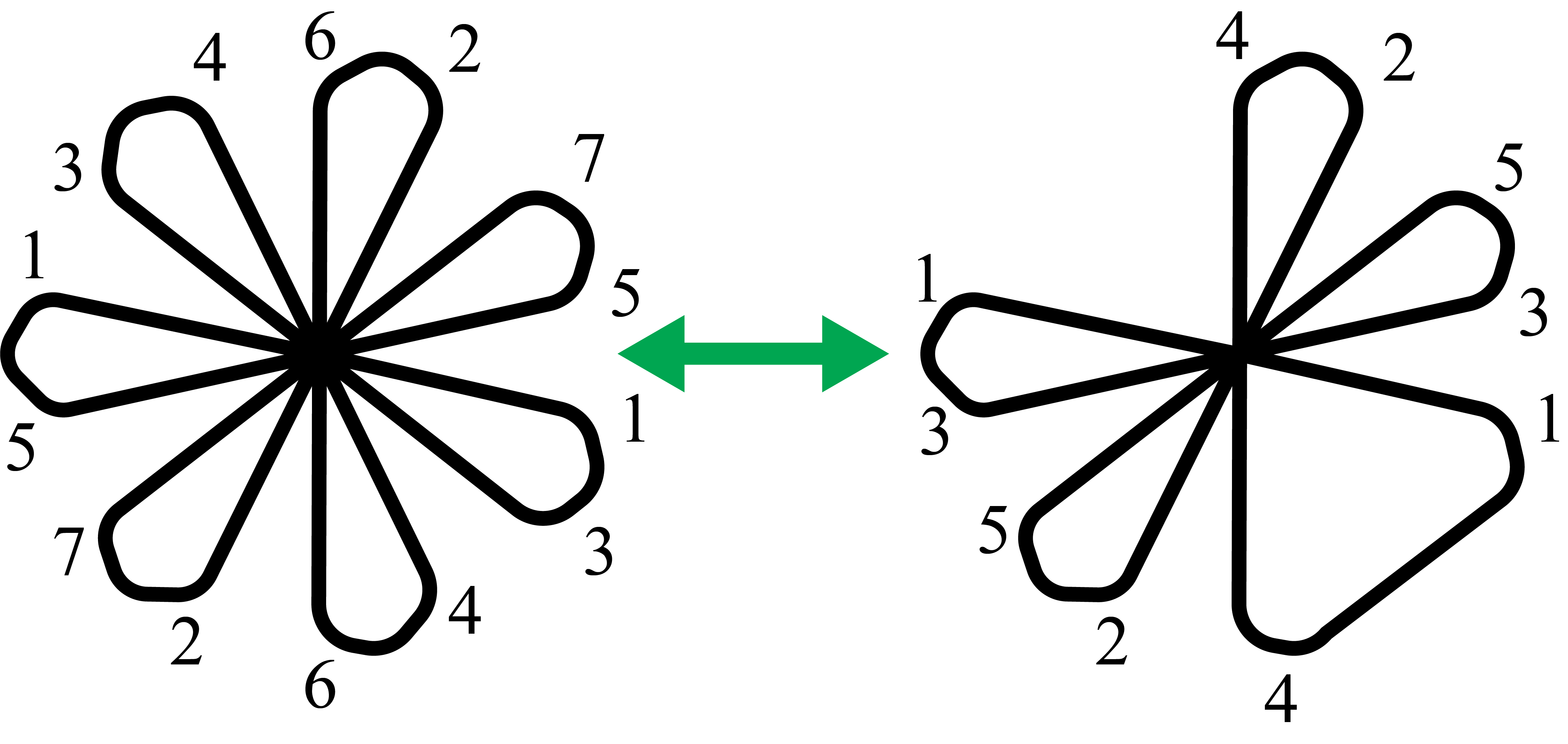}
    \caption{Simplifying a petal projection.}
    \label{fig:simplify-pp}
\end{figure}

This result is very interesting, as it significantly reduces the number of possible irreducible petal projections we need to consider. This alone, with the properties discussed above, shows that the trefoil is the only knot with petal number $5$. Indeed, there exists a more complete classification of when two petal permutations describe the same knot, given in the recent paper \textit{A Reidemeister type theorem for petal diagrams of knots} \cite{petal-reidemeister}. This is an exciting development, as it gives more credence to the theory that petal projections provide a new, useful language from the geometric to the purely algebraic.\\

The last property of petal projections that we observe is that all petal projections of knots must have an odd number of petals. This is a result of the fact that if we draw a petal projection with an even number of petals (say, $2n$), we  end up with an $n$-component link. Figure \ref{fig:2n-petals} provides an example.
\begin{figure}
    \centering
    \includegraphics[width = 0.50\textwidth]{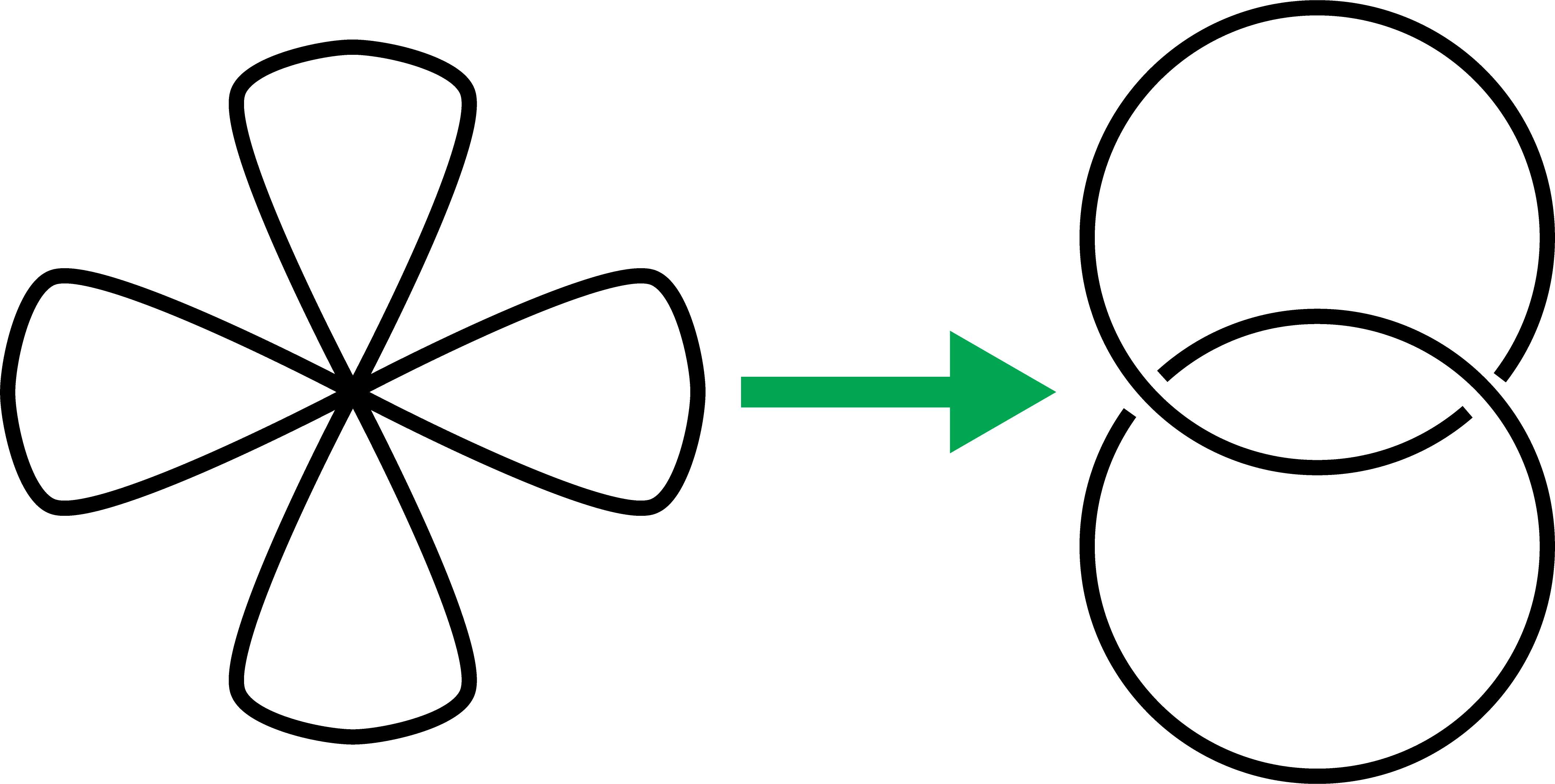}
    \caption{Any petal projection with an even number of petals splits into a (possibly linked) linked union of unknots.}
    \label{fig:2n-petals}
\end{figure}

\subsection{Split petal projections}

\begin{definition}
A {\bf split petal projection} is a redrawing of a petal projection in such a way that: 
\begin{itemize}
    \itemsep0em
    \item There are only double-crossings. 
    \item The split petal projection has the same symmetry as the original petal projection. 
    \item There are no reducible crossings.
    \item The split petal projection is uniquely identified by the same petal permutation. 
\end{itemize}
The notion of split petal projections briefly appears in \cite{adams2} under the name ``star patterns", but is not explored.
\end{definition}

For example, in Figure \ref{fig:trefoil-pp-spp}, we see a 5-petal projection of the trefoil knot, and its corresponding split petal projection.

\begin{figure}[ht]
    \centering
    \includegraphics[width=0.50\textwidth]{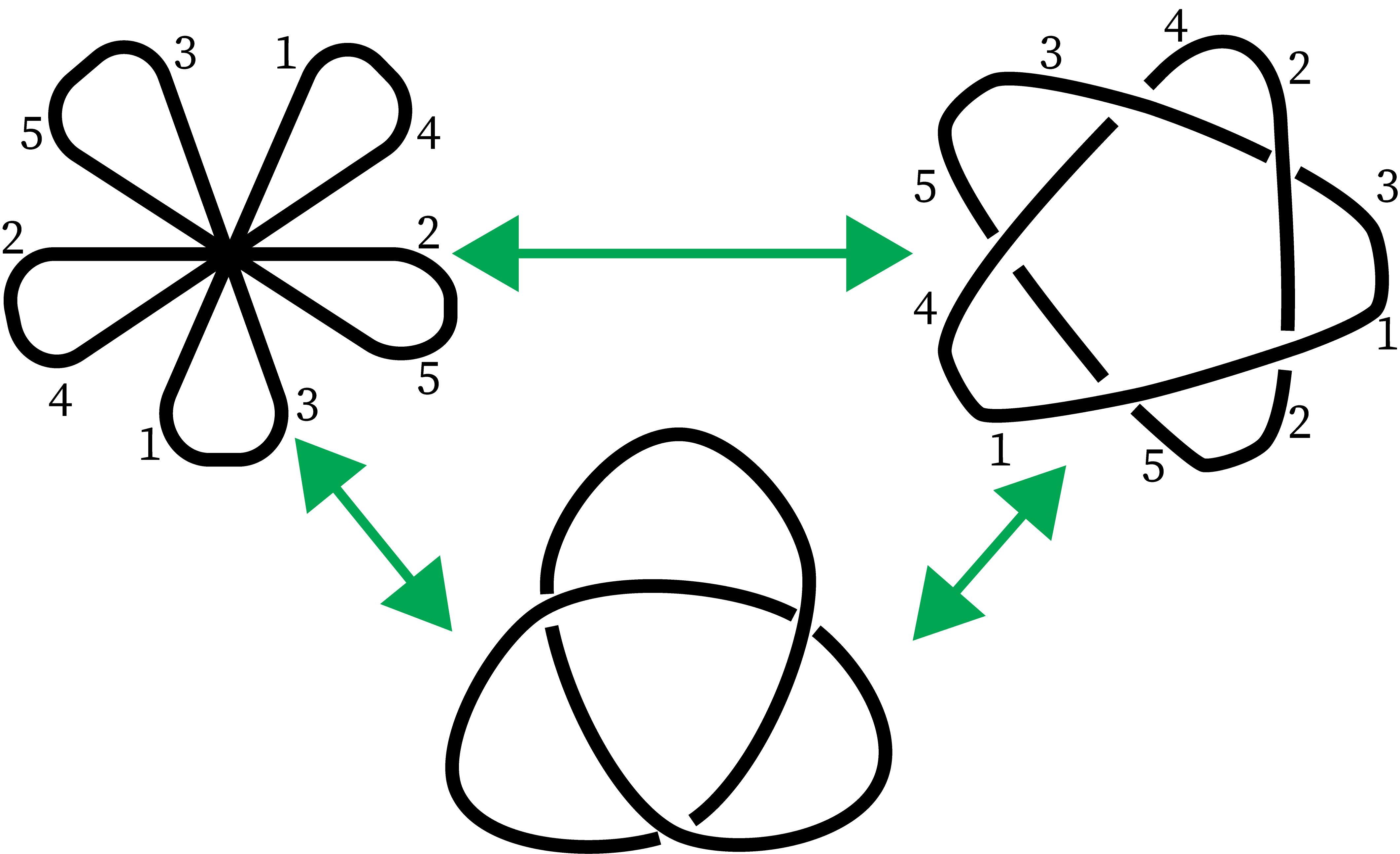}
    \caption{A petal projection and split petal projection of the trefoil.}
    \label{fig:trefoil-pp-spp}
\end{figure}

To see how to ``split'' a petal projection, it is helpful to work backwards---we instead visualize how a split petal projection can be reformed into a petal projection, although the process is invertible. \\

In Figure \ref{fig:petal-projection-repair}, we provide an example showing how to assemble a petal projection from a split petal projection. Here, we'll work with the knot shadow for simplicity. In the figure, we're using the example of a split petal projection with $7$ petals, but the process is analogous for any number of petals. Simply fold each strand (a portion of the knot extending from one of the outer-most points of the diagram to another) into the center, one at a time.

\begin{figure}[ht!]
    \centering
    \includegraphics[width=0.80\textwidth]{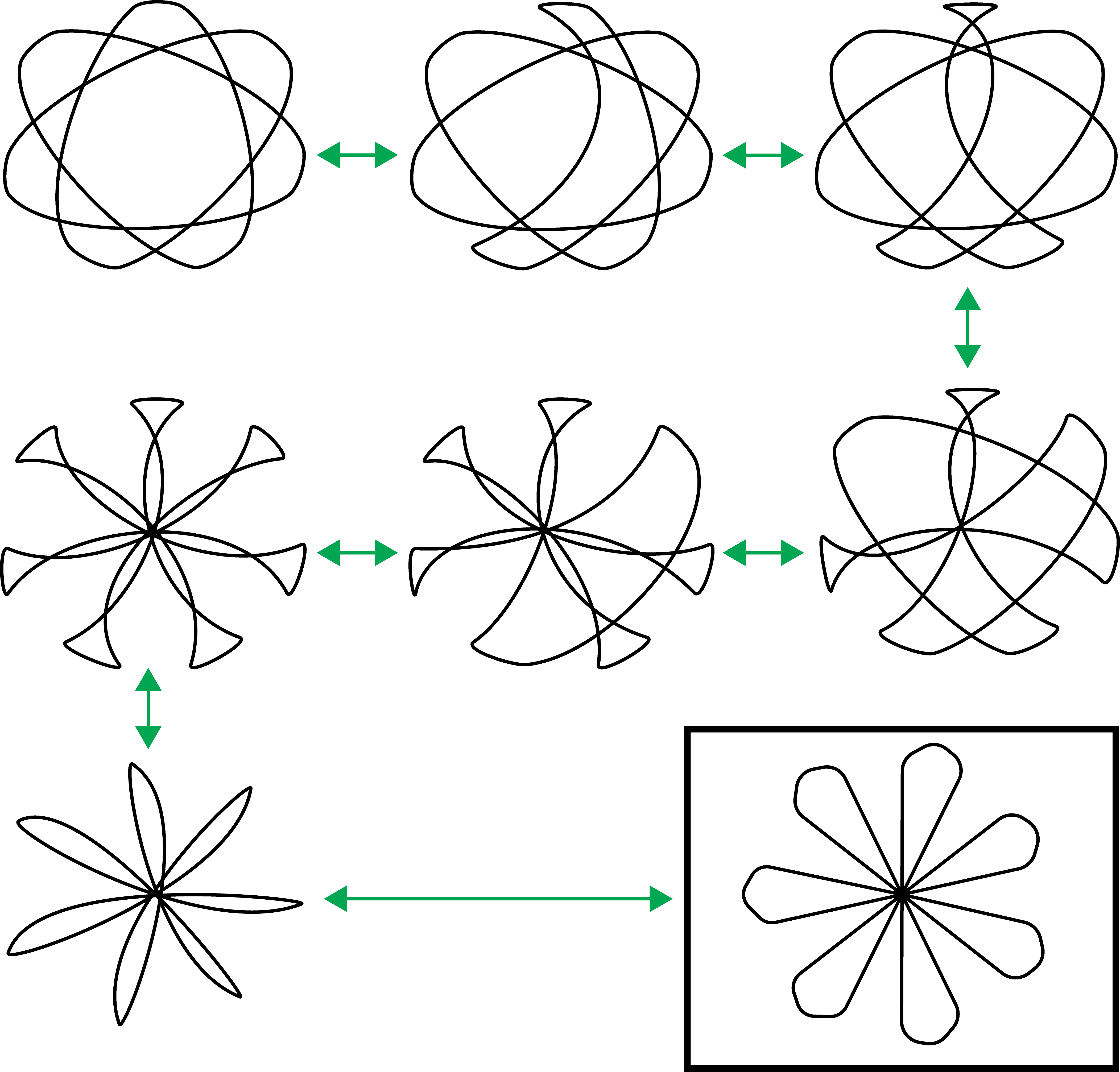}
    \caption{Repairing a split petal projection.}
    \label{fig:petal-projection-repair}
\end{figure}

\begin{definition}\label{spp-petal-number}
The {\bf petal number} of a split petal projection is the petal number of the petal projection that arises from the folding process described in Figure \ref{fig:petal-projection-repair}. 
\end{definition}

Split petal projections have some particularly nice properties. For example,
\begin{proposition}
A split petal projection with $p$ petals has $\frac{p(p-3)}{2}$ crossings and the same number of arcs.
\end{proposition}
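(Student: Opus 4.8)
The plan is to prove the crossing count first and then obtain the arc count for free. Recall the earlier observation that any knot diagram with at least one crossing has as many arcs as crossings; so once I show that a split petal projection with $p$ petals has $\frac{p(p-3)}{2}$ crossings, the claim about arcs follows immediately. (Since $p$ is odd and a genuine split projection has crossings, the crossingless-unknot exception never arises for $p \geq 5$, and the degenerate cases $p=1,3$ can be checked by hand.)

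To count crossings, I would first pin down the underlying shadow, because the over/under data recorded by the petal permutation affects only which strand lies on top at a given crossing, not the planar curve itself. Hence every split petal projection with $p$ petals has the \emph{same} shadow, and I may compute with whichever description is most convenient. Using the folding process of Figure~\ref{fig:petal-projection-repair} together with the requirements in the definition (the $p$-fold symmetry is preserved and no reducible crossings appear), this shadow is the symmetric ``star polygon'' on $p$ tips in which the closed curve repeatedly jumps to a most distant available tip: each of the $p$ tips is an endpoint of exactly two chords, and two consecutive tips are never joined directly, since a short chord between adjacent tips would produce a reducible crossing.

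The count itself I would organize as an inclusion argument on pairs of strands. The central multicrossing of the folded petal projection has $p$ strands, one for each pass through the crossing recorded by the permutation; label them $s_1, \dots, s_p$ in the cyclic order in which the knot traverses them. Splitting the multicrossing resolves it into double crossings, one for each pair of strands that genuinely cross. A pair $s_i, s_{i+1}$ that is consecutive in the traversal (indices mod $p$) is joined at a petal tip and therefore meets there \emph{without} crossing, and there are exactly $p$ such consecutive pairs. I would then argue that each of the remaining $\binom{p}{2}-p$ non-consecutive pairs crosses exactly once, so that
\[
\binom{p}{2} - p = \frac{p(p-1)}{2} - p = \frac{p(p-3)}{2}
\]
gives the number of crossings, and hence the number of arcs.

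The step I expect to be the main obstacle is precisely the claim that \emph{every} non-consecutive pair of strands crosses, and crosses exactly once. Showing the ``at least once'' direction amounts to verifying that the four tip-endpoints of any two non-consecutive chords interleave around the boundary circle, which reduces to a short but genuine combinatorial lemma: no chord of the densest symmetric star polygon can have both of its endpoints inside a single one of the two arcs cut off by another non-adjacent chord. This is exactly what distinguishes a split petal projection from an arbitrary polygon drawing on $p$ tips, whose crossing number would depend on the visiting order. The ``exactly once'' direction then follows because the chords are drawn as simple arcs and the $p$-fold symmetry together with the no-reducible-crossing condition forces each such intersection to be a single transverse double point. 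Once this interleaving lemma is secured, the remaining arithmetic is routine.
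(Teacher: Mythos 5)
Your proposal is correct and arrives at the same arithmetic as the paper --- $\binom{p}{2}-p=\frac{p(p-1)}{2}-p=\frac{p(p-3)}{2}$, followed by the arcs-equal-crossings observation --- but it justifies the count by a different route. The paper argues \emph{dynamically}: starting from the petal projection, it perturbs the $p$ strands off the central \"{u}bercrossing point, so that each of the $\binom{p}{2}$ pairs of strands still meets exactly once (this comes essentially for free, since every pair already met at the single multicrossing point and a small generic perturbation just separates those coincident intersections into distinct double points), and then untwists the $p$ petal loops, each untwisting removing exactly one crossing. Your argument is \emph{static}: you fix the final star-polygon shadow and count crossing pairs of chords directly, noting that the $p$ consecutive pairs meet at petal tips without crossing while each non-consecutive pair crosses exactly once; your $p$ non-crossing consecutive pairs correspond precisely to the paper's $p$ untwisted loops. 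The trade-off sits exactly at the obstacle you flagged: in your framing, ``every non-consecutive pair crosses exactly once'' requires the chord-interleaving lemma for the densest symmetric star polygon, which is true and provable along the lines you sketch, but is genuine work; in the paper's framing that fact is inherited from the \"{u}bercrossing and never needs combinatorial analysis. What your approach buys in exchange is a self-contained count on the shadow itself, with no appeal to tracking a deformation, and it makes explicit the star-polygon structure that the paper leaves implicit. Both arguments finish identically, invoking the paper's earlier remark that any diagram with at least one crossing has equally many arcs and crossings.
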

\begin{proof}
Split petal projections can be obtained by perturbing all the arcs at central crossing and then untwisting all the resulting loops, as seen in Figure \ref{fig:petal-projection-repair} (starting at the end of the sequence and working backwards). Now, initially each of the $p$ strands going through the \"{u}bercrossing intersects each other strand exactly once. However, there is only one crossing because all of these intersections happen at a single point. In contrast, after the initial perturbation of the arcs off of the central intersection point, each of the $p$ strands still intersects each of the other $p-1$ strands, but now we only have double-crossings. This results in $\frac{p(p-1)}{2}$ crossings in total.  Then, the untwisting of all the loops (to avoid reducible crossings) reduces the number of crossings by $p$, giving $\frac{p(p-3)}{2}$ crossings, as desired. Finally, we note that, in any knot diagram with at least one crossing, the number of crossings equals the number of arcs.
\end{proof}

In a petal projection, a strand of the knot passing through the \"{u}bercrossing has a certain height, described by the petal permutation. The key fact about split petal projections is that the crossings ``inherit" these heights in a symmetric way, so that each strand height from the petal projection helps to determine the crossing information at $p-3$ crossings in the split petal projection.\\

One way to think about this is that a strand of the knot associated to a given height intersects the strands associated to each of the other heights exactly once when strands are initially perturbed to avoid the central \"{u}bercrossing point, resulting in $p-1$ crossings associated to a given height. The two crossings at the ends of the strand are removed when loops are untwisted, resulting in a total of $p-3$ crossings that are labeled by a given height.\\

There is another way to come to this answer. The key is to notice that the number and arrangement of crossings is independent of the heights; another way to say this is that the \textit{projection} of a split petal projection is independent of its petal permutation. Therefore, any given height will appear at the same number of crossings, by symmetry. This gives the following equation: 
\begin{equation*}
    \frac{\text{\# of crossings} \times \text{\# of heights at each crossing}}{\text{\# of heights}} = \text{\# of crossings per height}
\end{equation*}
Filling in the requisite information, we again find that a given height appears at $p-3$ crossings: 
\begin{equation*}
    \frac{\left(p(p-3)/2\right) \cdot 2}{p} = p-3
\end{equation*}
This style of reasoning will be critical in the next section, when we develop a completely algebraic formula for the Gauss code of a split petal projection.\\






\section{Gauss Codes}
A {\em Gauss code} for a knot is a sequence of integers that describes the order in which you encounter crossings in a knot diagram when traveling along the knot. 
To generate a Gauss code from a knot diagram, we first number the crossings in the diagram $1$ through $n$ (where $n$ is the total number of crossings) in any order. Then, starting at any point on the knot and choosing a direction to travel, we traverse the knot by following the strand as it weaves through the knot. Whenever we pass through a crossing, we record either the crossing's number (if we travel ``over" the crossing) or the crossing number's negative (if we travel ``under" the crossing).\\

\begin{figure}
    \centering
    \includegraphics[width=0.35\textwidth]{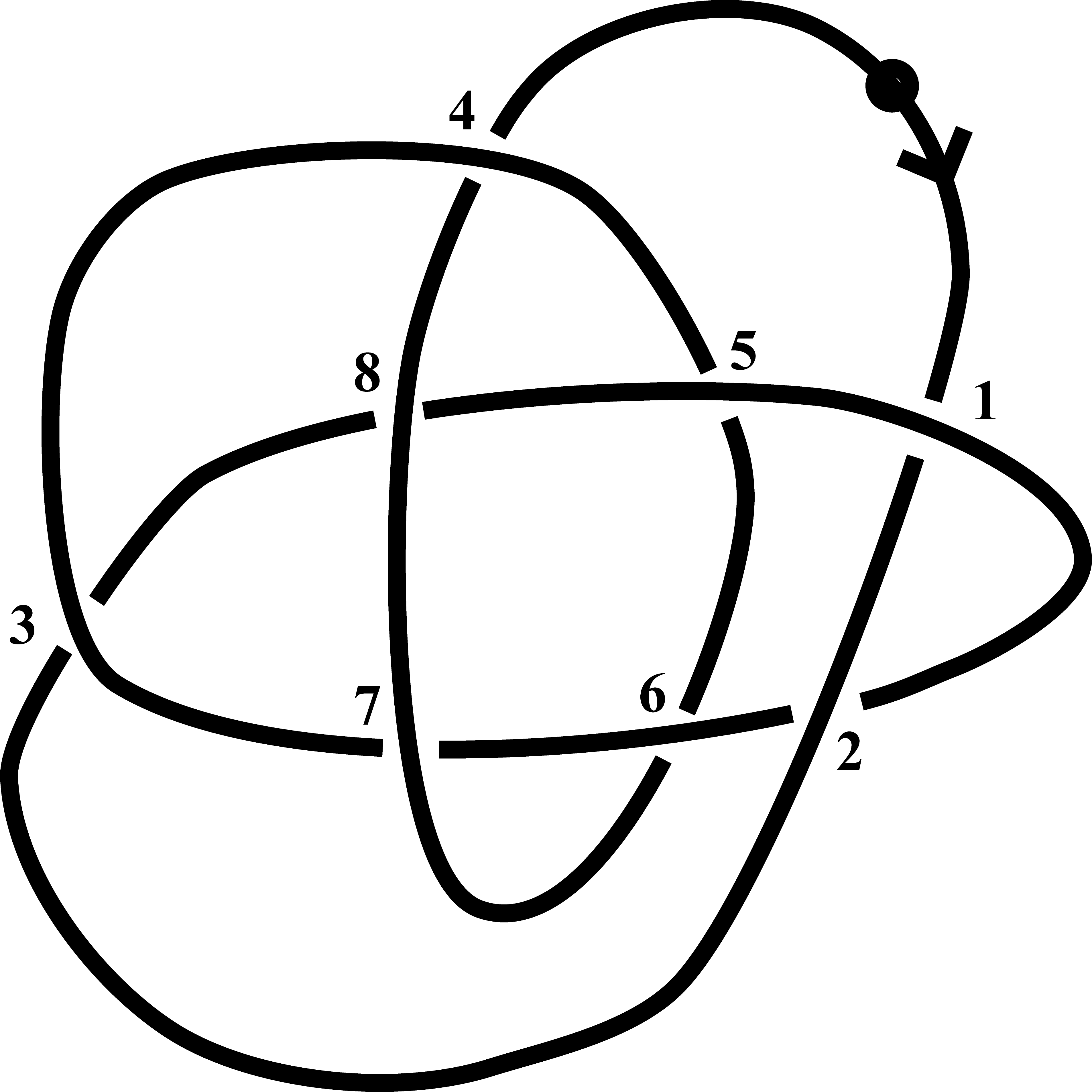}
    \caption{A Gauss code of the $8_{19}$ knot: $-1,2,-3,-8,5,1,-2,6,-7,3,4,-5,-6,7,8,-4$.}
    \label{fig:8-19-gauss}
\end{figure}

Visually, calculating the Gauss code of a simple knot is easy. See Figure \ref{fig:8-19-gauss} for an example. While each knot may be represented by many different Gauss codes (generated by different diagrams, numberings, base points, etc.), a given Gauss code determines a knot type uniquely (up to mirror image).\\

Now, our main goal is to evaluate the knot determinant for general split petal projections, so we need an algorithm  to generate the Gauss code for any split petal projection from a petal permutation. In short, we'd like to find a function with a petal permutation as an input and a Gauss code---preferably one that inherits certain properties from the symmetry of the petal projection---as the output.

\subsection{Gauss Codes for Split Petal Projections}
Before we begin describing an algorithm to generate Gauss codes for split petal projections, let's generate a simpler, {\em unsigned} Gauss code. The advantage of beginning with an unsigned Gauss code (that doesn't record over-under information about crossings) is that we don't need to know in advance the information about relative heights of strands that will be provided by the petal permutation. We simply need to know the petal number.\\

Let's consider a na\"ive attempt to create an algorithm to find an unsigned Gauss code for a given petal number. We will use the example of the $7$-split petal projection with petal permutation $(1, 3, 5, 2, 7, 4, 6)$, the figure-eight knot. One way to start might be to label each crossing in the order in which we first encounter it. This starts simply, giving an unsigned Gauss code that begins $1,2,3,\dots$ for the first few numbers. However, this method comes with its own problems; while the Gauss code starts simple, it quickly gets complicated, as shown in Figure \ref{fig:gauss-naive-7pp}.\\

\begin{figure}
    \centering
    \includegraphics[width=0.38\textwidth]{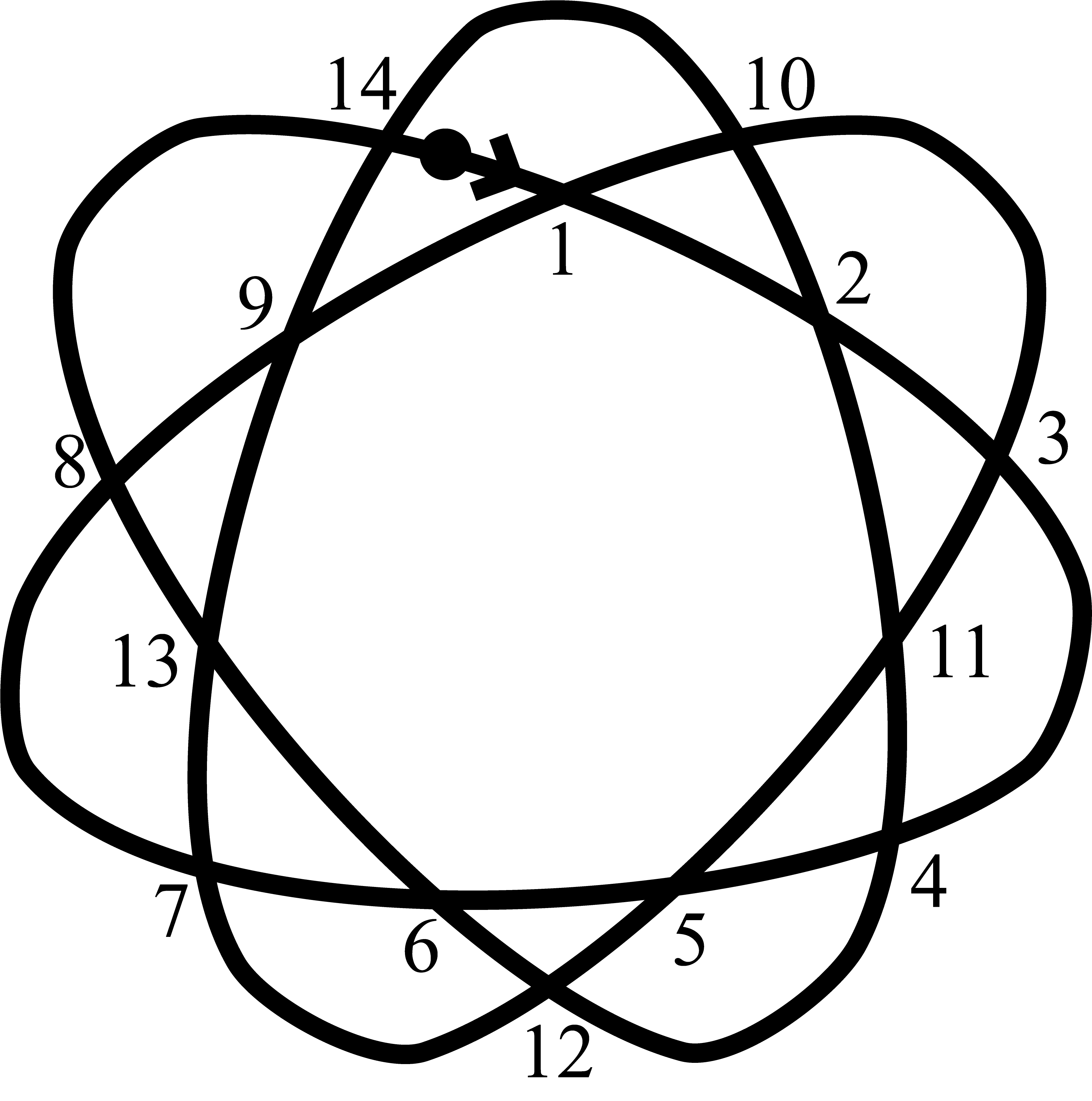}
    \caption{A naive attempt to generate the unsigned Gauss code of a split petal projection: \\ $1,2,3,4,5,6,7,8,9,1,10,3,11,5,12,7,13,9,14,10,2,11,4,12,6,13,8,14$.}
    \label{fig:gauss-naive-7pp}
\end{figure}
A better way to generate a useful unsigned Gauss code starts by considering the structure of the split petal projection. Notice that in general, there are $\frac{p-3}{2}$ ``layers" of crossings, each with $p$ symmetrically distributed crossings. These layers appear in concentric circles, with the first layer (Layer `0') being all the crossings that touch the central region, and each next layer being the next concentric circle out. For instance, when the petal number $p = 9$, there are $\frac{9-3}{2} = 3$ layers, as shown in in Figure \ref{fig:stevedore-layers}. Layer 0 consists of all crossings at the vertices of the central nonagon. Layer 1 consists of the nine crossings lying just outside the nonagon, and Layer 2 contains the remaining nine crossings along the boundary of the exterior region of the diagram. Another example, this one for $p = 7$, can be seen in Figure \ref{fig:general-gauss-7pp}.\\

\begin{figure}
    \centering
    \includegraphics[width=0.40\textwidth]{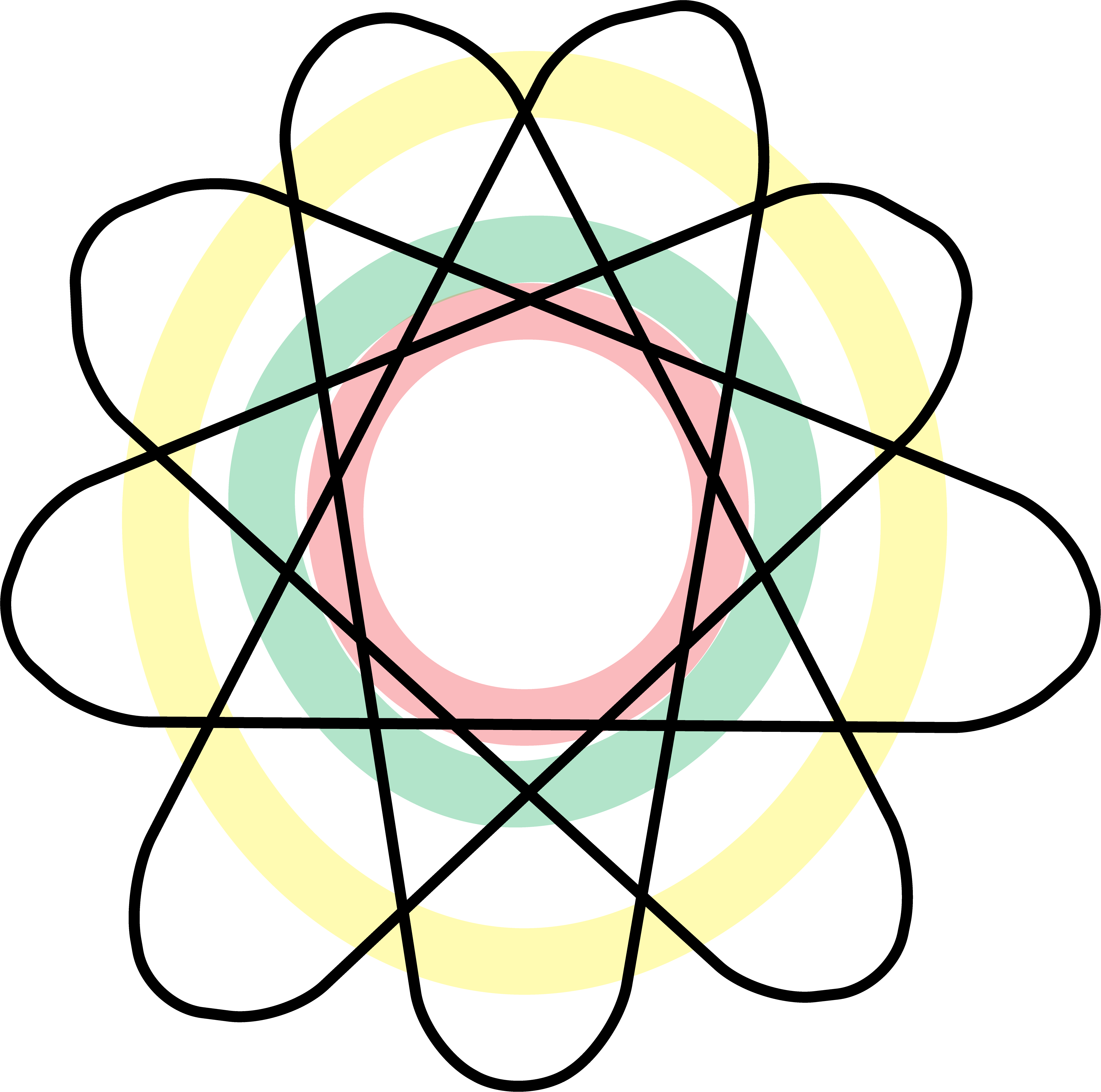}
    \caption{Layer $0$ is the set of crossings lying in the innermost (red) circle, Layer $1$ is the set of crossings in the middle (green) circle, and Layer $2$ is the set of crossings in the outermost (yellow) circle.}
    \label{fig:stevedore-layers}
\end{figure}

Now, instead of labeling the crossings directly, we begin by labeling each crossing with an ordered pair $(L,n)$, where $L$ denotes the layer that the crossing is in, and $n$ is the ``index" of the crossing \textit{in the particular layer}. We determine the index of each crossing by letting the first crossing we encounter in each layer be index $1$, and number the rest in the layer clockwise. Numbering the crossings this way more clearly shows a general pattern in the unsigned Gauss code. This can be verified with the example of the $7$-petal projection, shown in Figure \ref{fig:general-gauss-7pp}. \\

\begin{figure}
    \centering
    \includegraphics[width=0.40\textwidth]{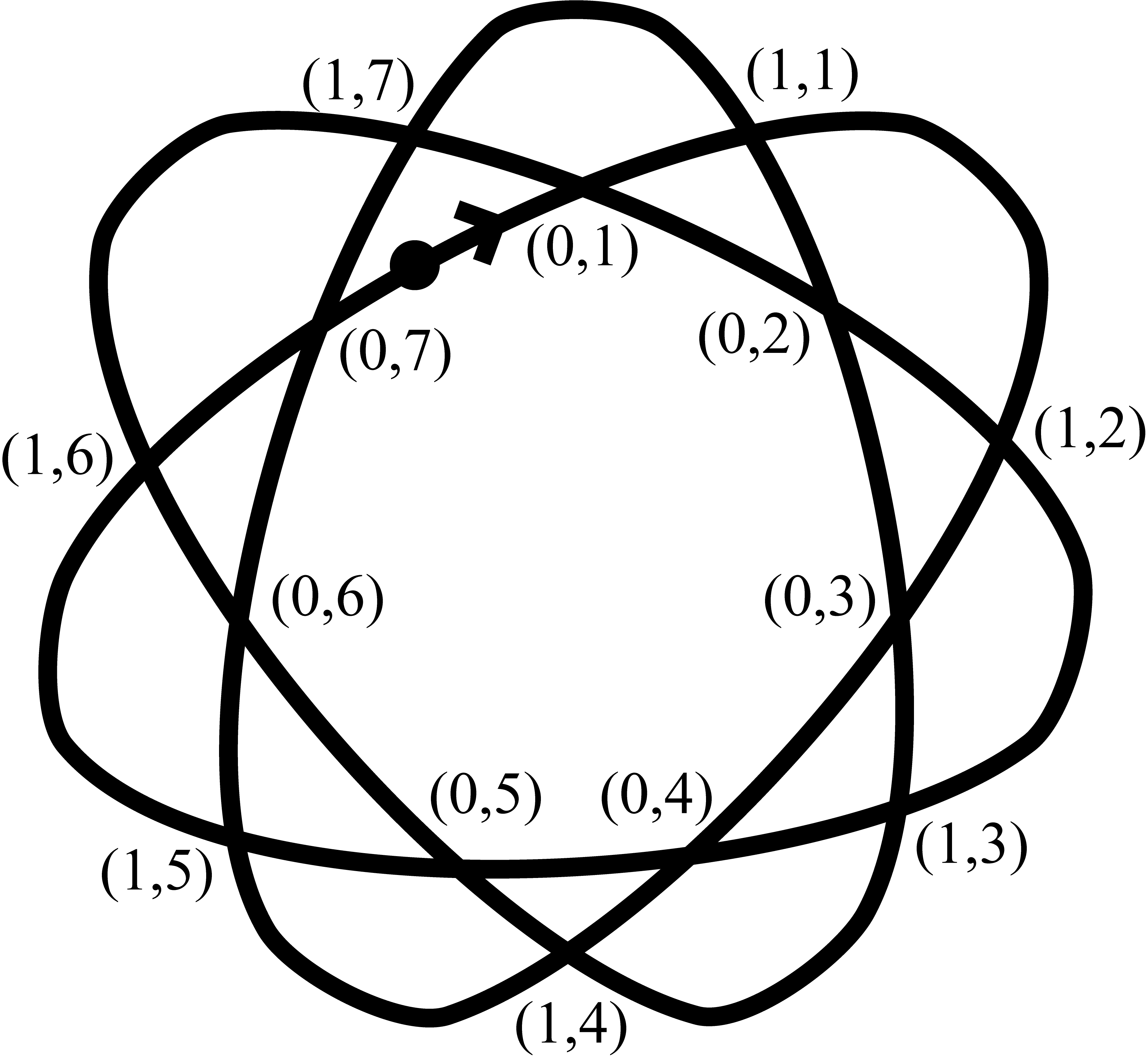}
    \caption{The unsigned Gauss code of a $7$-split petal projection is:\\$(0,1),(1,1),(1,2),(0,3),(0,4),(1,4),(1,5),(0,6),(0,7),(1,7),(1,1),(0,2),(0,3),(1,3),$\\
    $(1,4),(0,5),(0,6),(1,6),(1,7),(0,1),(0,2),(1,2),(1,3),(0,4),(0,5),(1,5),(1,6),(0,7)$}
    \label{fig:general-gauss-7pp}
\end{figure}

As you can see, the first entries in the ordered pairs appear in a repeating palindrome: $0, 1, 1, 0$. For larger knots, this palindrome will grow exactly as expected, first to $0,1,2,2,1,0$, then to $0,1,2,3,3,2,1,0$, and so on. This result generalizes into Definition \ref{def:L}. This palindrome repeats in blocks of length $p-3$. From the block perspective, the second number's pattern is just as predictable. In a $7$-petal projection, each block is $4$ numbers long, and the numbers come in the form $n, n, n+1, n+2$. Note that we reduce each number in the block, replacing it with the smallest \textit{positive} integer congruent to it mod $p$. In a $9$-petal projection, the blocks have length $6$, and the numbers come in the form $n, n, n, n+1, n+2, n+3$. Again, this result will generalize into Definitions \ref{def:a} and \ref{def:n_k}.\\

To demonstrate our algorithm, we consider the Stevedore knot, a $9$-petal knot with petal permutation $(1, 3, 5, 2, 8, 4, 6, 9, 7)$. See Figure \ref{fig:general-gauss-9pp}. \\

\begin{figure}
    \centering
    \includegraphics[width=0.4\textwidth]{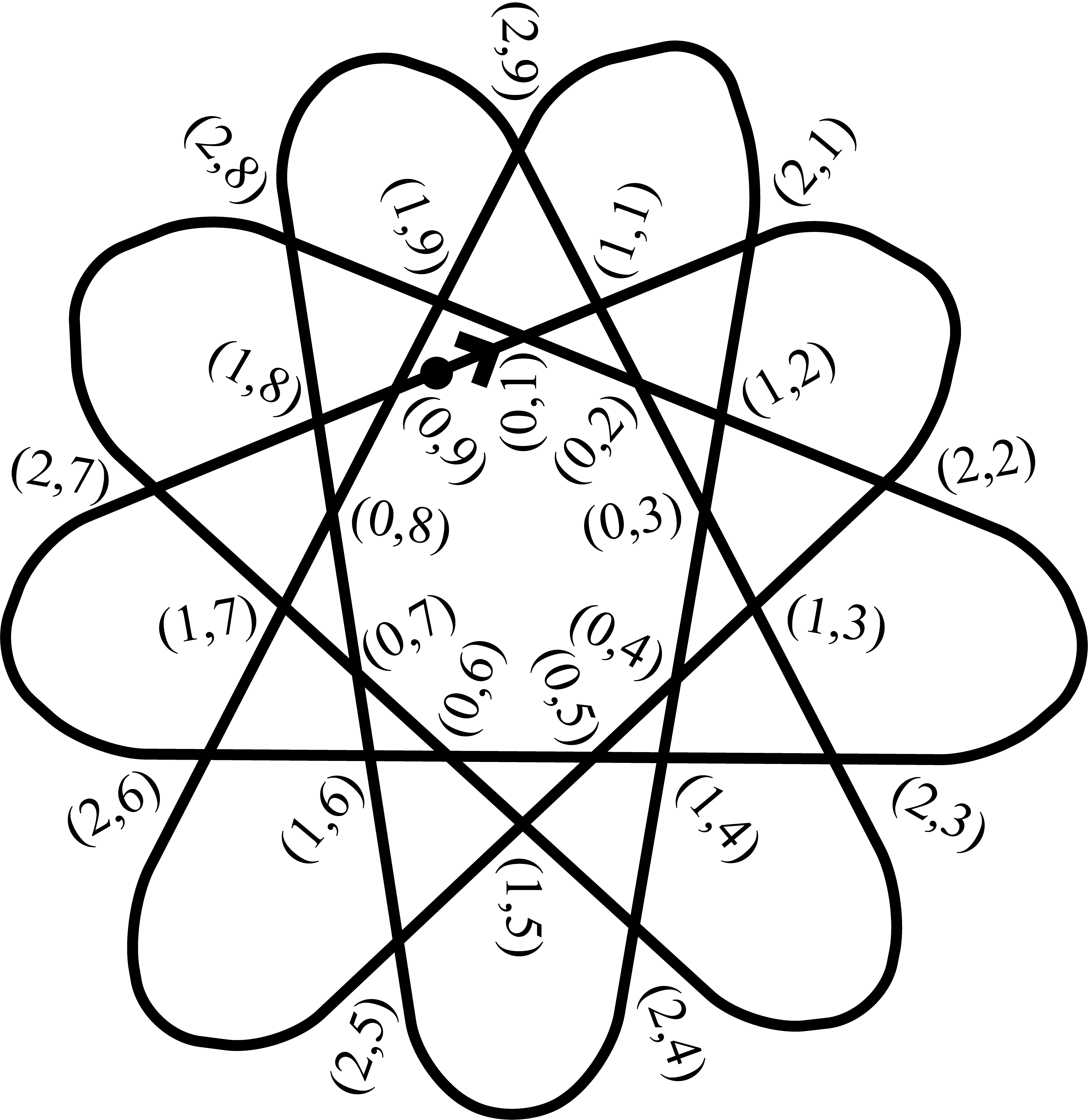}
    \caption{The unsigned Gauss code of a $9$-split petal projection is: $(0,1),(1,1),(2,1),(2,2),(1,3),(0,4),(0,5),(1,5),(2,5),$
    $(2,6),(1,7),(0,8),(0,9),(1,9),(2,9),(2,1),(1,2),(0,3),$
    $(0,4),(1,4),(2,4),(2,5),(1,6),(0,7),(0,8),(1,8),(2,8),$
    $(2,9),(1,1),(0,2),(0,3),(1,3),(2,3),(2,4),(1,5),(0,6),$
    $(0,7),(1,7),(2,7),(2,8),(1,9),(0,1),(0,2),(1,2),(2,2),$
    $(2,3),(1,4),(0,5),(0,6),(1,6),(2,6),(2,7),(1,8),(0,9)$.}
    \label{fig:general-gauss-9pp}
\end{figure}

If we can explicitly generate this pattern for any size petal permutation, we will have effectively generated an unsigned Gauss code for each petal number, since we can simply map $(L,n) \to L \cdot p + n$ (where $p$ is the petal number). This map is a bijection from the set of pairs $(L,n)$ to the integers $1$ through $\frac{p(p-3)}{2}$. We know this because $n$ ranges between $1$ and $p$ (since there are $p$ crossings in each layer), and $L$ varies from $0$ to $\frac{p-3}{2}-1$, since there are $\frac{p-3}{2}$ layers. This bijection inspires Proposition \ref{prop:unsigned}, which completes the algorithm. In Figure \ref{fig:immediate-gauss-9pp}, we give the example for the Stevedore knot.\\

\begin{figure}
    \centering
    \includegraphics[width=0.4\textwidth]{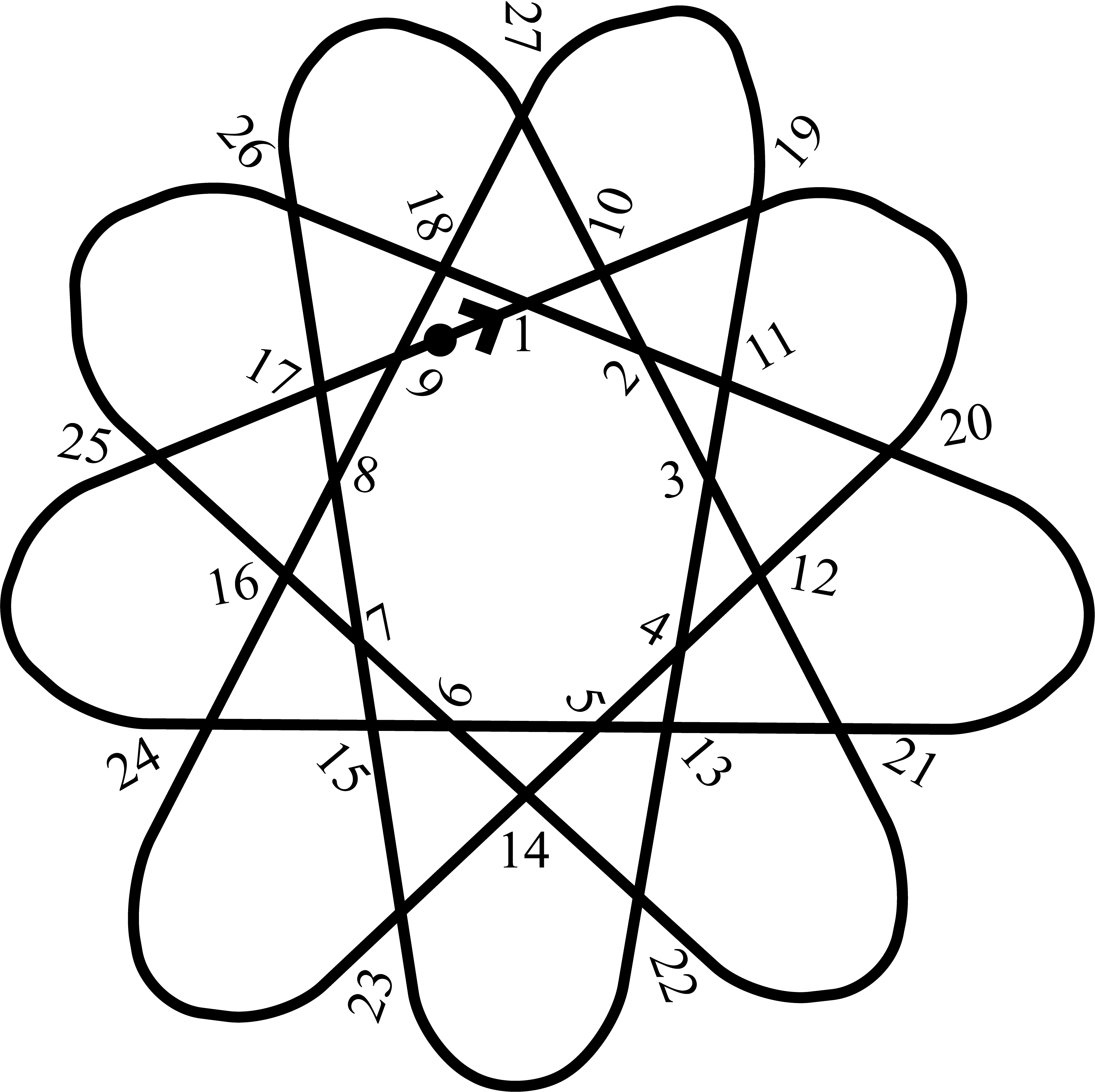}
    \caption{The unsigned Gauss code of a $9$-split petal projection is: \\
    $1,10,19,20,12,4,5,14,23,24,16,8,9,18,27,19,11,3,$\\ $4,13,22,23,15,7,8,17,26,27,10,2,3,12,21,22,14,6,$\\ $7,16,25,26,18,1,2,11,20,21,13,5,6,15,24,25,17,9$.}
    \label{fig:immediate-gauss-9pp}
\end{figure}
Now, our goal is to take our unsigned Gauss code and use the petal permutation to determine where to insert the negative signs that denote underpasses. To achieve this, we use the fact that a strand in the petal projection associated with a given height is mapped to a strand in the split petal projection passing through $p-3$ crossings. This allows us to make a table mapping a crossing label from the unsigned Gauss code to the height of the over- or under-strand at that crossing. After making this table, we can determine if the strand associated to a number in the unsigned Gauss code is going over or under at the crossing to make the signed Gauss code. Again, we will use the example of the Stevedore knot, which has a $9$-split petal projection with petal permutation (1, 3, 5, 2, 8, 4, 6, 9, 7):\\

\begin{table}[]
    \centering
    \begin{tabular}{||c c c c c c c c c c c c c c c c c c c||} 
     \hline
     \textbf{Index} & {\bf 0} & 1 & 2 & 3 & 4 & 5 & 6 & 7 & 8 & 9 & 10 & 11 & 12 & 13 & 14 & 15 & 16 & 17\\ 
     \hline
     \textbf{Gen. Gauss Code} & {\bf 1} & 10 & 19 & 20 & 12 & 4 & 5 & 14 & 23 & 24 & 16 & 8 & 9 & 18 & 27 & 19 & 11 & 3\\ 
     \hline
     \textbf{P.P. Height} & {\bf 1} & 1 & 1 & 1 & 1 & 1 & 3 & 3 & 3 & 3 & 3 & 3 & 5 & 5 & 5 & 5 & 5 & 5\\
     \hline 
     \hline
     \textbf{Index} & 18 & 19 & 20 & 21 & 22 & 23 & 24 & 25 & 26 & 27 & 28 & 29 & 30 & 31 & 32 & 33 & 34 & 35\\ 
     \hline
     \textbf{Gen. Gauss Code} & 4 & 13 & 22 & 23 & 15 & 7 & 8 & 17 & 26 & 27 & 10 & 2 & 3 & 12 & 21 & 22 & 14 & 6\\ 
     \hline
     \textbf{P.P. Height} & 2 & 2 & 2 & 2 & 2 & 2 & 8 & 8 & 8 & 8 & 8 & 8 & 4 & 4 & 4 & 4 & 4 & 4\\ 
     \hline
     \hline
     \textbf{Index} & 36 & 37 & 38 & 39 & 40 & {\bf 41} & 42 & 43 & 44 & 45 & 46 & 47 & 48 & 49 & 50 & 51 & 52 & 53\\
     \hline
     \textbf{Gen. Gauss Code} & 7 & 16 & 25 & 26 & 18 & {\bf 1} & 2 & 11 & 20 & 21 & 13 & 5 & 6 & 15 & 24 & 25 & 17 & 9\\ 
     \hline
     \textbf{P.P. Height} & 6 & 6 & 6 & 6 & 6 & {\bf 6} & 9 & 9 & 9 & 9 & 9 & 9 & 7 & 7 & 7 & 7 & 7 & 7\\ 
     \hline
    \end{tabular}
    
    \vspace{.2in}
    \caption{The unsigned Gauss code of a 9-split petal projection (which may resolve, for example, into the Stevedore knot when we sign the entries).}
    \label{tab:stevedore-unsigned-example}
\end{table}

From the table, we can finally create the signed Gauss code for the Stevedore knot as follows. Note, for instance, that the first ``1" in the Gauss code is positive because the height of the associated instance of ``1" in the table is 1, while the height of the second instance of ``1" in the table is 6. Since lower numbered arcs are taken to be above higher numbered arcs, this tells us that the first pass through crossing 1 should be an overpass. \\

\begin{center}
\fbox{%
    \parbox{400pt}{%
    \textbf{Stevedore Knot Gauss Code:}\\
    $\bm{1}, 10,19,20,12,4,5,14,-23,24,16,8,9,18,27,-19,11,-3,-4,13,22,23,15,7,-8,-17,-26,$ $-27,-10,2,3,-12,21,-22,-14,6,-7,-16,25,26,-18,\bm{-1},-2,-11,-20,-21,-13,-5,-6,$
    $-15,-24,-25,17,-9$
    }%
}
\end{center}
$ $\newline

We can use this to draw the split petal projection with crossing information clearly marked, as in Figure \ref{fig:signed-gauss-9pp}.When we create a general algorithm to generate the Gauss code of a split petal projection from a petal permutation, breaking it down into these individual steps allows for much easier computation. At this point, we are ready to explicitly describe how to determine the unsigned Gauss code for a given petal number. To begin, we will establish several notational definitions.

\begin{figure}
    \centering
    \includegraphics[width=0.45\textwidth]{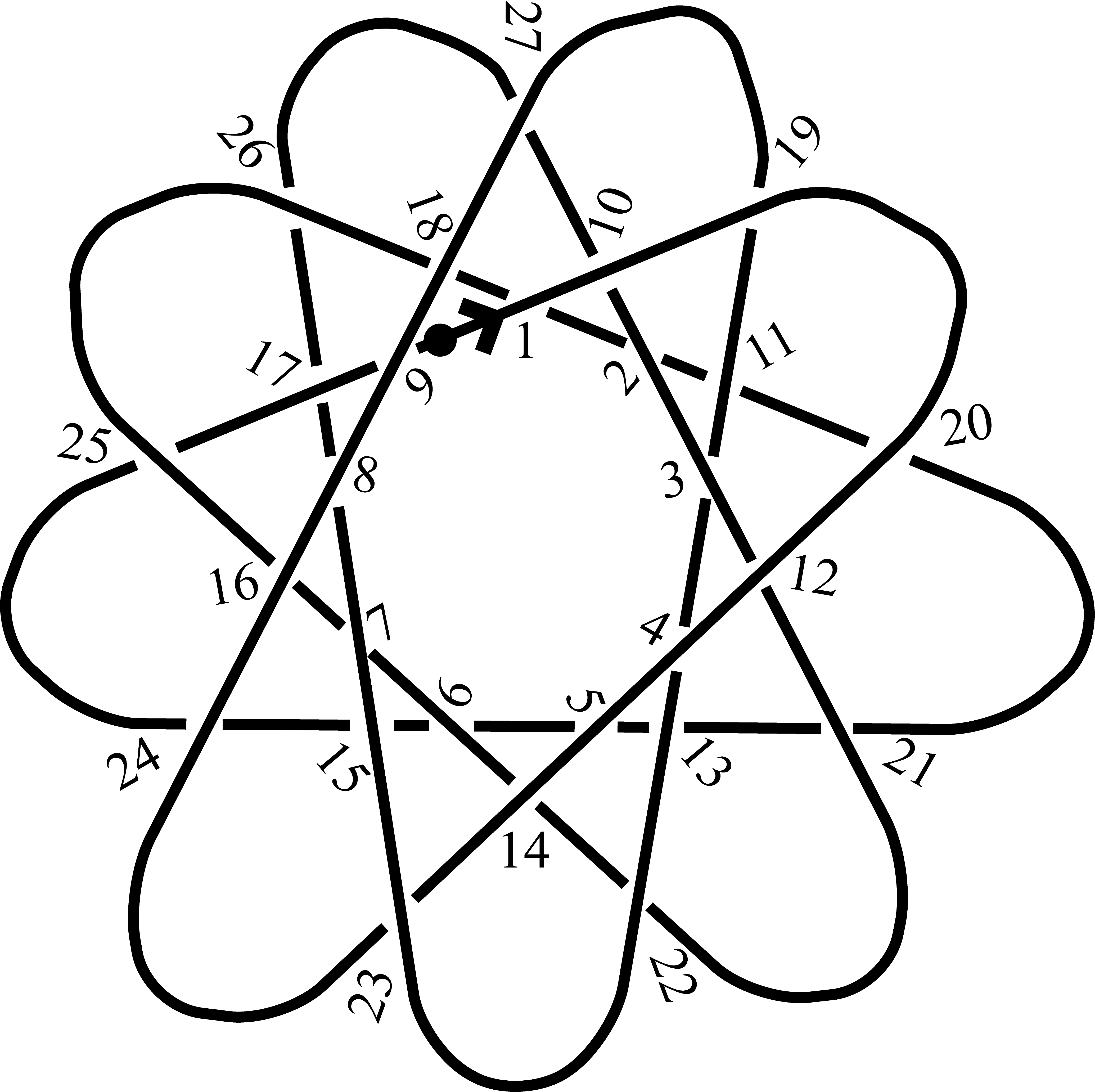}
    \caption{The Stevedore Knot's Split Petal Projection.}
    \label{fig:signed-gauss-9pp}
\end{figure}

\begin{definition}\label{def:r_n(k)}
Let $\bm{r_n(k)}$ be the smallest non-negative integer congruent to $k$ mod $n$, i.e., the remainder when $k$ is divided by $n$. For example, $r_2(5) = 1$, and $r_3(9) = 0$. 
\end{definition}

\begin{definition}\label{def:L}
Let $p$ be a petal number. Then $\bm{L^p}$ is the repeating palindromic sequence $$0,1, 2, \dots, \frac{p-5}{2}, \frac{p-5}{2}, \dots ,2,1,0, 0,1, 2, \dots, \frac{p-5}{2}, \frac{p-5}{2}, \dots 2, 1,0, \dots$$
        which we will denote by $L_0^p, L_1^p, \dots$. When the $p$ is understood, we may suppress the superscripts and denote our sequence by $L_0, L_1, \dots$ . 
        
        Note that the periodicity of $L^p$ is $p-3=\frac{p-5}{2}+\frac{p-5}{2}+2$.
\end{definition}

This definition encodes the ``layers" visited as we follow the knot, as discussed earlier.

\begin{definition}\label{def:a}
Let $p$ be a petal number. Then $\bm{a^p}$ is the sequence $$\underbrace{0,\dots,0}_{\frac{p-3}{2} \text{ times}},1,2,\dots, \frac{p-3}{2}, \underbrace{0,\dots,0}_{\frac{p-3}{2} \text{ times}},1,2,\dots, \frac{p-3}{2}, \dots $$
        denoted by $a^p_0, a^p_1, \dots$ . As before, when $p$ is understood, we may drop the superscripts and write $a_0, a_1, \dots$ . 
        
        Just as with the sequence $L^p$, the periodicity of $a^p$ is $p-3$ for all $p$.
\end{definition}

\begin{definition}\label{def:n_k}
Define $\bm{n_k^p}$ to be the quantity given by
$$r_p\left(\frac{p-1}{2} \cdot \left \lfloor \frac{k}{p-3} \right \rfloor + a^p_k\right) + 1.$$
\end{definition}

Similarly, this definition encodes what we called the ``index in a particular layer." Now, we are ready to describe a general formula for the unsigned Gauss code of a petal projection with $p$ petals.

\begin{proposition}\label{prop:unsigned}
An unsigned Gauss code for a split petal projection with $p$ petals is given by the sequence $$c_0, \dots c_{p(p-3)-1}$$
where $c_k = p \cdot L_k + n_k$. We call this specific Gauss code the {\bf petal unsigned Gauss code}.
\end{proposition}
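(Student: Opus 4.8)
The plan is to show directly that, as we traverse the knot along a split petal projection with $p$ petals, the $k$-th crossing encountered is exactly the one labelled $(L_k, n_k)$ in the layer-index scheme, so that the bijection $(L,n) \mapsto pL + n$ turns the traversal into the sequence $c_k = pL_k + n_k$. Since a single-component knot passes through each of its $\frac{p(p-3)}{2}$ crossings exactly twice, the Gauss code has $p(p-3)$ entries, matching the index range $0 \le k \le p(p-3)-1$. The whole argument rests on the observation, made earlier, that the \emph{shadow} of a split petal projection is independent of the petal permutation and carries the full $p$-fold rotational symmetry of the original petal diagram; I will use this symmetry to organize the traversal into $p$ congruent ``excursions,'' each of length $p-3$, and to reduce everything to the behavior of a single excursion.

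First I would analyze the layer sequence. Following the knot through one excursion corresponds to the strand leaving the central region, crossing outward successively through each of the $\frac{p-3}{2}$ concentric layers until it reaches the outer boundary, and then returning inward through the same layers (compare the folding construction of Figure \ref{fig:petal-projection-repair}). Hence within one excursion each layer is met exactly twice, in the palindromic order $0,1,\dots,\frac{p-5}{2},\frac{p-5}{2},\dots,1,0$, which is precisely one period of the sequence $L^p$ from Definition \ref{def:L}. As the excursions are identical up to rotation and there are $p$ of them, the full layer sequence is the $(p-3)$-periodic sequence $L_0,L_1,\dots$, as claimed.

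Next I would pin down the index-in-layer. Working within a single excursion, I would show that on the outbound half the strand stays radially aligned and therefore meets the \emph{same} index in each successive layer (offset $0$), while on the inbound half, traveling along the neighboring spoke, it meets an index one larger at each successive crossing; this produces the offsets $\underbrace{0,\dots,0}_{(p-3)/2},1,2,\dots,\frac{p-3}{2}$, exactly one period of $a^p$ (Definition \ref{def:a}). It then remains to track how the base index advances from one excursion to the next. Here I would use the fact that, in the cyclic order in which the knot visits petals, consecutive petals sit $\frac{p-1}{2}$ positions apart mod $p$, which makes the starting index of the $m$-th excursion equal to $r_p\!\left(\frac{p-1}{2}m\right)+1$. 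Adding the within-excursion offset $a_k$ to the base $\frac{p-1}{2}\lfloor k/(p-3)\rfloor$ and reducing into $\{1,\dots,p\}$ via $r_p(\cdot)+1$ reproduces $n_k$ of Definition \ref{def:n_k} exactly.

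Combining the two sequences gives $c_k = p\,L_k + n_k$ as the list of crossings in the order traversed, which is the desired unsigned Gauss code. The main obstacle is the last step of the index analysis: rigorously extracting the inter-excursion rotation constant $\frac{p-1}{2}$ and the outbound/inbound offset rule from the geometry, rather than merely reading them off the $p=7$ and $p=9$ pictures (Figures \ref{fig:general-gauss-7pp} and \ref{fig:general-gauss-9pp}). I would handle this by arguing entirely at the level of the shadow: fix a base excursion, compute its crossing labels explicitly from the folding construction, and then propagate to all $p$ excursions by induction on the excursion index $m$, using the rotational symmetry to show that each step rotates the ring of $p$ crossings in a layer by the constant amount $\frac{p-1}{2}$. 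Verifying that this constant is coprime to $p$ (so that the $p$ excursions indeed begin at $p$ distinct indices, consistent with the knot being a single component) is a useful consistency check on the formula.
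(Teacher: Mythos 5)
Your proposal is correct, and it is worth pointing out that it actually goes beyond what the paper does: the paper never proves Proposition \ref{prop:unsigned} at all. The proposition is presented as the generalization of patterns read off the $p=7$ and $p=9$ examples (Figures \ref{fig:general-gauss-7pp} and \ref{fig:general-gauss-9pp}), together with the informal discussion of layers and the bijection $(L,n)\mapsto pL+n$; ``this bijection inspires Proposition \ref{prop:unsigned}'' is the only justification offered. Your plan follows the same decomposition as that informal discussion---palindromic layer sequence, within-block index offsets, inter-block rotation---but you supply the two geometric facts the paper leaves as observations: (i) within one excursion the outbound half keeps a constant index while the inbound half increments it by one per crossing, reproducing one period of $a^p$, and (ii) successive excursions are rotated by the constant $\frac{p-1}{2}$ mod $p$, reproducing the term $\frac{p-1}{2}\cdot\lfloor k/(p-3)\rfloor$ in Definition \ref{def:n_k}. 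Both claims check out against the $p=7$ and $p=9$ data (block base indices $1,4,7,3,6,2,5$ and $1,5,9,4,8,3,7,2,6$ respectively, each advancing by $\frac{p-1}{2}$), and your strategy of working with the permutation-independent shadow, establishing a base excursion from the folding construction, and propagating by rotational symmetry is exactly the right tool to make the argument rigorous; the coprimality check $\gcd\bigl(\frac{p-1}{2},p\bigr)=1$ is a sensible consistency test since the traversal must visit all $p$ blocks. In short: what your approach buys is an actual proof; what the paper's buys is brevity, treating the proposition as a construction whose correctness is evident from the examples. The one piece still to be written carefully is the base-case computation of a single excursion's labels from the folding process, which you have correctly identified as the main obstacle.
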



\begin{definition}\label{def:unsigned}
In a (signed or unsigned) Gauss code $C$ corresponding to a petal projection with $p$ petals, the {\bf index} $\bm{k}$ {\bf petal block} corresponds to the $p-3$ crossings $c_{k(p-3)}, \dots, c_{(k+1)(p-3)-1}$. Notice that a petal projection with $p$ petals will have exactly $p$ petal blocks, each corresponding to a height in the petal permutation.
\end{definition}

Consider, for example, Table \ref{tab:stevedore-unsigned-example}. In this table, there are a total of $9$ ``blocks" which all have the same height in the petal permutation, and all of these blocks have the same length (namely $9 - 3 = 6$). To help illustrate the previous definitions, we will again compute the unsigned Gauss code of a $9$-split petal projection in Table \ref{tab:stevedore-unsigned-example-from-algorithm}. Compare this to Table \ref{tab:stevedore-unsigned-example} -- note that the row $c_k$ in Table \ref{tab:stevedore-unsigned-example-from-algorithm} is identical to the unsigned Gauss code computed in Table \ref{tab:stevedore-unsigned-example}.

\begin{table}[h]
    \centering
    \begin{tabular}{||c c c c c c c c c c c c c c c c c c c||} 
     \hline
     \textbf{Index (${\bm k}$)} & 0 & 1 & 2 & 3 & 4 & 5 & 6 & 7 & 8 & 9 & 10 & 11 & 12 & 13 & 14 & 15 & 16 & 17\\ 
     \hline
     $\bm{L^p}$ (see Def. \ref{def:L}) & 0 & 1 & 2 & 2 & 1 & 0 & 0 & 1 & 2 & 2 & 1 & 0 & 0 & 1 & 2 & 2 & 1 & 0\\
     \hline
     $\bm{a^p}$ (see Def. \ref{def:a}) & 0 & 0 & 0 & 1 & 2 & 3 & 0 & 0 & 0 & 1 & 2 & 3 & 0 & 0 & 0 & 1 & 2 & 3\\
     \hline
     $\bm{n^p}$ (see Def. \ref{def:n_k}) & 1 & 1 & 1 & 2 & 3 & 4 & 5 & 5 & 5 & 6 & 7 & 8 & 9 & 9 & 9 & 1 & 2 & 3\\
     \hline
     ${\bm c_k}$ (see Prop. \ref{prop:unsigned}) & 1 & 10 & 19 & 20 & 12 & 4 & 5 & 14 & 23 & 24 & 16 & 8 & 9 & 18 & 27 & 19 & 11 & 3\\
     \hline \hline
     \textbf{Index (${\bm k}$)} & 18 & 19 & 20 & 21 & 22 & 23 & 24 & 25 & 26 & 27 & 28 & 29 & 30 & 31 & 32 & 33 & 34 & 35\\ 
     \hline
     $\bm{L^p}$ (see Def. \ref{def:L}) & 0 & 1 & 2 & 2 & 1 & 0 & 0 & 1 & 2 & 2 & 1 & 0 & 0 & 1 & 2 & 2 & 1 & 0\\
     \hline
     $\bm{a^p}$ (see Def. \ref{def:a}) & 0 & 0 & 0 & 1 & 2 & 3 & 0 & 0 & 0 & 1 & 2 & 3 & 0 & 0 & 0 & 1 & 2 & 3\\
     \hline
     $\bm{n^p}$ (see Def. \ref{def:n_k}) & 4 & 4 & 4 & 5 & 6 & 7 & 8 & 8 & 8 & 9 & 1 & 2 & 3 & 3 & 3 & 4 & 5 & 6\\
     \hline
     ${\bm c_k}$ (see Prop. \ref{prop:unsigned}) & 4 & 13 & 22 & 23 & 15 & 7 & 8 & 17 & 26 & 27 & 10 & 2 & 3 & 12 & 21 & 22 & 14 & 6\\
     \hline\hline
     \textbf{Index (${\bm k}$)} & 36 & 37 & 38 & 39 & 40 & 41 & 42 & 43 & 44 & 45 & 46 & 47 & 48 & 49 & 50 & 51 & 52 & 53\\
     \hline
     $\bm{L^p}$ (see Def. \ref{def:L}) & 0 & 1 & 2 & 2 & 1 & 0 & 0 & 1 & 2 & 2 & 1 & 0 & 0 & 1 & 2 & 2 & 1 & 0\\
     \hline
     $\bm{a^p}$ (see Def. \ref{def:a}) & 0 & 0 & 0 & 1 & 2 & 3 & 0 & 0 & 0 & 1 & 2 & 3 & 0 & 0 & 0 & 1 & 2 & 3\\
     \hline
     $\bm{n^p}$ (see Def. \ref{def:n_k}) & 7 & 7 & 7 & 8 & 9 & 1 & 2 & 2 & 2 & 3 & 4 & 5 & 6 & 6 & 6 & 7 & 8 & 9\\
     \hline
     ${\bm c_k}$ (see Prop. \ref{prop:unsigned}) & 7 & 16 & 25 & 26 & 18 & 1 & 2 & 11 & 20 & 21 & 13 & 5 & 6 & 15 & 24 & 25 & 17 & 9\\
     \hline
    \end{tabular}
    \vspace{.2in}
    
    \caption{The unsigned Gauss code $c_0,\dots,c_{p(p-3)-1}$ of a 9-split petal projection, computed directly using the definitions created above.}
    \label{tab:stevedore-unsigned-example-from-algorithm}
\end{table}

\subsection{The Signed Gauss Code} 
From here, we seek to produce a \textit{signed} Gauss code, which depends not only on the number of petals, but also on our knot's petal permutation. Converting to the signed Gauss code from the unsigned Gauss code is not difficult: it simply involves negating each $c_k$ that is associated with an understrand at a crossing.\\

To do this, we need to find the heights of the two strands that make up a crossing. For this, it will be helpful to have the following definition, which we will draw on in the following sections: 

\begin{definition}
Suppose $k$ is an index in the petal unsigned Gauss code $c_0,\dots,c_{p(p-3)-1}$. Then define $k'$ to be the unique index of the petal unsigned Gauss code such that $k \neq k'$ but $c_k = c_{k'}$. 
\end{definition}

Note that $k$ and $k'$ correspond to one another, meaning that $(k')'=k$. For example, if we are working with the Stevedore knot, and take $k = 0$, then $k' = 41$, since $c_0 = c_{41} = 1$, and similarly if we take $k = 41$, then $k' = 0$ for the same reason. Then, our table allows us to see that the height of $c_{41}$ is $6$ but the height of $c_0$ is $1$, so $c_0$ is the overstrand and stays positive in the signed Gauss code, while $c_{41}$'s sign is flipped. Note that in general, it suffices to identify the \textit{petal block} that $c_{k'}$ is in, since each block corresponds to a single number in the petal permutation. For instance, for each $j\in\{36, 37, 38, 39, 40, 41\}$, $c_j$ in the Stevedore unsigned Gauss code has height 6. So in fact, we don't need to know that $k'=41$ specifically. We simply need to know that $36 \leq k' < 42$.\\

Thus, we seek a simple expression for $\lfloor \frac{k'}{p-3} \rfloor$, the index of the petal block that $c_{k'}$ is in (between $0$ and $p-1$, inclusive). If we use the previous example of $k = 0$ and $k' = 41$, we can find the petal block that $k'$ is in (the 7th block, with index $6$---coincidentally the same number as the height associated to this index) by plugging in $k'=41$ and $p=9$ to get $\lfloor \frac{41}{9-3} \rfloor = 6$. To get the most out of identifying heights using petal blocks rather than $k'$ itself, we seek a more general expression for block indices that doesn't rely on a brute-force search to find $k'$. 
\begin{lemma}\label{lem:0}Given that $r_n(k)$ denotes the smallest non-negative integer congruent to $k$ mod $n$, then 
$$\left\lfloor \frac{k'}{p-3} \right\rfloor = r_p\left(\left\lfloor\frac{k}{p-3}\right\rfloor + 2(a_{k'} - a_k)\right).$$
where, as before, $k'$ is the unique integer such that $c_k = c_{k'}$ but $k' \neq k$.
\end{lemma}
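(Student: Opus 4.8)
The plan is to strip the geometric content away and reduce the whole statement to a single congruence coming from Definition \ref{def:n_k}, which I can then solve for the block index by elementary modular arithmetic. First I would use the bijection $(L,n)\mapsto pL+n$ recorded just before Proposition \ref{prop:unsigned}: because $1\le n\le p$, the value ranges $\{pL+1,\dots,p(L+1)\}$ attached to distinct layers $L$ are disjoint, so the equality $c_k=c_{k'}$ cannot arise from a collision across layers. It forces $(L_k,n_k)=(L_{k'},n_{k'})$, and in particular $n_k=n_{k'}$. This last equation is the only thing I will need; the entire lemma is really a restatement of $n_k = n_{k'}$.

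Next I would unwind Definition \ref{def:n_k}. Writing $B_k=\lfloor k/(p-3)\rfloor$ and $B_{k'}=\lfloor k'/(p-3)\rfloor$ for the two block indices, and noting that $n_k=r_p(\cdot)+1$ agrees with $n_{k'}=r_p(\cdot)+1$ exactly when the two arguments of $r_p$ are congruent mod $p$, the relation $n_k=n_{k'}$ becomes
$$\tfrac{p-1}{2}\,B_k + a_k \;\equiv\; \tfrac{p-1}{2}\,B_{k'} + a_{k'} \pmod{p}.$$
Rearranging isolates the block indices on one side:
$$\tfrac{p-1}{2}\,(B_{k'}-B_k) \;\equiv\; a_k - a_{k'} \pmod{p}.$$

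The remaining step is to clear the coefficient $\tfrac{p-1}{2}$. Since every petal number $p$ is odd, $\tfrac{p-1}{2}$ is an integer and $-2\cdot\tfrac{p-1}{2}=-(p-1)\equiv 1\pmod p$; so multiplying the displayed congruence through by $-2$ gives
$$B_{k'}-B_k \;\equiv\; 2\,(a_{k'}-a_k) \pmod{p}, \qquad\text{i.e.}\qquad B_{k'} \equiv B_k + 2(a_{k'}-a_k)\pmod p.$$
Finally, because there are exactly $p$ petal blocks, $B_{k'}=\lfloor k'/(p-3)\rfloor$ lies in $\{0,1,\dots,p-1\}$, so it is the unique residue representative of the right-hand side; that is, $B_{k'}=r_p\!\big(B_k+2(a_{k'}-a_k)\big)$, which is precisely the claimed identity. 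A quick sanity check against the Stevedore example ($p=9$, $k=0$, $k'=41$, $a_{41}=3$) confirms $r_9(0+2\cdot 3)=6=\lfloor 41/6\rfloor$.

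I do not anticipate a genuine obstacle; the argument is short once it is framed correctly. The only points that require care are (i) justifying that $c_k=c_{k'}$ genuinely forces $n_k=n_{k'}$, rather than some accidental equality bridging two different layers, which the disjoint-range observation settles, and (ii) the bookkeeping that $-2\cdot\tfrac{p-1}{2}\equiv 1\pmod p$ and that $B_{k'}$ falls in the window $\{0,\dots,p-1\}$ so that $r_p$ recovers it on the nose. Notably, I avoid needing $2$ to be invertible: multiplying (not dividing) by $-2$ is all that is required, so the proof goes through for every odd $p$ regardless of primality.
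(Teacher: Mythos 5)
Your proposal is correct and follows essentially the same route as the paper's own proof: extract $n_k = n_{k'}$ from $c_k = c_{k'}$, unwind Definition \ref{def:n_k} into a congruence mod $p$, multiply through by $-2$ using $(p-1)\equiv -1 \pmod p$, and conclude by noting the block index lies in $\{0,\dots,p-1\}$. Your added justifications (the disjoint-range argument for why layers cannot collide, and the remark that multiplying by $-2$ avoids any invertibility hypothesis) are finer-grained than the paper's appeal to Proposition \ref{prop:unsigned}, but the argument is the same.
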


\begin{proof}
First note that $c_{k'} = c_k$ implies that both $L_{k'} = L_k$ and $n_{k'} = n_k$, by Proposition \ref{prop:unsigned}. We may use the definition of $n_k$, Definition \ref{def:n_k}, to get the following.

$$\frac{p-1}{2} \cdot \left\lfloor \frac{k}{p-3} \right\rfloor + a_k \equiv \frac{p-1}{2} \cdot \left\lfloor \frac{k'}{p-3} \right\rfloor + a_{k'} \mod p$$

We also use the fact that $(p-1)$ is congruent to $-1$ mod $p$ and multiply through by $-2$.

$$\left\lfloor \frac{k}{p-3} \right\rfloor - 2a_k \equiv \left\lfloor \frac{k'}{p-3} \right\rfloor - 2a_{k'} \mod p$$

Finally, we rearrange and note that both $\left\lfloor \frac{k}{p-3} \right\rfloor$ and $\left\lfloor \frac{k'}{p-3} \right\rfloor$ are bounded by $0$ and $p-1$, giving us the desired result: 

$$\left\lfloor \frac{k'}{p-3} \right\rfloor = r_p\left(\left\lfloor\frac{k}{p-3}\right\rfloor + 2(a_{k'} - a_k)\right).$$
\end{proof}

This will prove to be a useful formula, but we still lack a crucial piece: it is difficult to determine what $a_{k'}$ will be without knowing $k'$, rendering this formula in its current form less useful. Therefore, we seek an expression for $a_{k'}-a_k$ that is dependent only on $k$. To find this, note that both $a_n = a_{r_{p-3}(n)}$ and $L_n = L_{r_{p-3}(n)}$ for all $n$. These follow from Definitions $\ref{def:L}$ and $\ref{def:a}$, the definitions of $a$ and $L$, since both sequences were defined to be periodic with period $p-3$.

\begin{lemma}\label{lem:1} Given that $r_n(k)$ denotes the smallest non-negative integer congruent to $k$ mod $n$, then $$r_{p-3}(k) + r_{p-3}(k') = p-4$$ for each $k$ and the corresponding $k'$. 
\end{lemma}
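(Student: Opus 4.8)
The plan is to extract from the single equation $c_k = c_{k'}$ the two independent constraints $L_k = L_{k'}$ and $n_k = n_{k'}$, and then to exploit the \emph{palindromic} structure of the sequence $L^p$ to pin down how the positions of $k$ and $k'$ sit inside their blocks. Since $L^p$ and $a^p$ both have period $p-3$, everything depends only on residues mod $p-3$, so I will work with $i := r_{p-3}(k)$ and $i' := r_{p-3}(k')$, the positions of $k$ and $k'$ within a single period; the target identity $r_{p-3}(k)+r_{p-3}(k')=p-4$ then becomes simply $i + i' = p-4$.

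First I would record, exactly as at the start of the proof of Lemma \ref{lem:0}, that because $c_k = pL_k + n_k$ with $1 \le n_k \le p$ and $0 \le L_k \le \frac{p-5}{2}$, the layer and the index can be read off from $c_k$ independently; hence $c_k = c_{k'}$ forces both $L_k = L_{k'}$ and $n_k = n_{k'}$. Using the periodicity $L_n = L_{r_{p-3}(n)}$, I then reduce to a single palindromic block $0,1,\dots,\frac{p-5}{2},\frac{p-5}{2},\dots,1,0$ of length $p-3$.

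The key structural observation I would establish is that this block, indexed on positions $0,1,\dots,p-4$, satisfies the reflection identity $L_i = L_{p-4-i}$, and moreover that each value is attained at \emph{exactly} two positions, namely $i$ and $p-4-i$. These two positions are always distinct: since $p$ is odd, $p-4$ is odd, so the fixed-point equation $i = p-4-i$ has no integer solution. Consequently, the equality $L_k = L_{k'}$ leaves only two possibilities, $i' = i$ or $i' = p-4-i$.

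The final and most delicate step is to rule out the degenerate case $i' = i$, and this is where I would bring in Lemma \ref{lem:0} together with the fact that $a_n$ also depends only on $r_{p-3}(n)$. If $i' = i$, then $a_{k'} = a_k$, so the correction term $2(a_{k'} - a_k)$ in Lemma \ref{lem:0} vanishes and we get $\left\lfloor k'/(p-3)\right\rfloor = r_p\!\left(\left\lfloor k/(p-3)\right\rfloor\right) = \left\lfloor k/(p-3)\right\rfloor$, the last equality holding because the block index already lies in $\{0,\dots,p-1\}$. Equal block indices together with equal within-block positions force $k = k'$, contradicting the defining property of $k'$. Hence $i' = p-4-i$, which is precisely $r_{p-3}(k)+r_{p-3}(k')=p-4$. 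I expect the main obstacle to be phrasing the ``each value occurs exactly twice, at reflected positions'' claim cleanly, since it amounts to checking that the two halves of the palindrome, including the doubled central value $\frac{p-5}{2}$, tile the positions $0,\dots,p-4$ without overlap; once that is nailed down, the rest is bookkeeping that follows directly from Lemma \ref{lem:0}.
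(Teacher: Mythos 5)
Your proof is correct and follows essentially the same route as the paper's: both extract $L_k = L_{k'}$ and $n_k = n_{k'}$ from $c_k = c_{k'}$, use the palindromic structure of $L^p$ to reduce to the dichotomy $r_{p-3}(k') = r_{p-3}(k)$ or $r_{p-3}(k) + r_{p-3}(k') = p-4$, and eliminate the first case by showing it would force $k = k'$. Your only deviation is cosmetic: you invoke Lemma \ref{lem:0} (legitimately, since it precedes this lemma and does not depend on it) to conclude that the block indices coincide, whereas the paper draws the same conclusion directly from the definition of $n_k$ --- the underlying computation is identical.
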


This lemma can be intuitively understood as follows. Recall that $k$ and $k'$ denote the indices of a given crossing as it appears twice in an unsigned Gauss code.  Because the two indices correspond to the two distinct strand heights associated to the same crossing, they must be in different petal blocks. What this lemma tells us is that they appear in their respective blocks in complementary positions. For instance, if $k$ appears as the first index in its block, then $k'$ appears as the last index in its block. If $k$ appears as the second index in its block, then $k'$ appears as the second-to-last index in its block, and so forth. Thus, the sum of the remainders of the two indices when divided by block length equals $p-4$. 

\begin{proof} Consider that $c_k = c_{k'}$ implies that $L_k = L_{k'}$. Because of the palindromic nature of $L_k$, this implies that either $r_{p-3}(k) = r_{p-3}(k')$ or $r_{p-3}(k) + r_{p-3}(k') = p-4$.\\

To prove that we are in the second case, we assume that the first equality holds and derive a contradiction. If $r_{p-3}(k) = r_{p-3}(k')$, then $a_k = a_{k'}$, by the periodicity of the $a$ sequence. Combined with the fact that $n_k = n_{k'}$, this forces $k$ and $k'$ to be in the same petal block. In this case, the equality of $r_{p-3}(k) = r_{p-3}(k')$ implies that $k=k'$, contradicting the definition of $k'$. Thus, $r_{p-3}(k) + r_{p-3}(k') = p-4$, as desired.\end{proof}

\begin{lemma}\label{lem:2}

The quantity $a_{k'}-a_k$ is given by
\[a_{k'}-a_k = \begin{cases} 
\frac{p - 3}{2} - r_{p-3}(k) & \text{if }\; r_{p-3}(k) < \frac{p-3}{2}, \\
\frac{p-3}{2} - r_{p-3}(k) - 1 & \text{otherwise.} \end{cases} \]
\end{lemma}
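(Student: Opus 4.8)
The plan is to reduce everything to an explicit piecewise description of the sequence $a^p$ within a single period, then invoke Lemma \ref{lem:1} to relate the position of $k'$ to that of $k$. First I would record the closed form of $a_k$ in terms of its reduced index $m := r_{p-3}(k)$. Since $a^p$ has period $p-3$ (Definition \ref{def:a}), we have $a_k = a_m$, and reading off the definition, the first $\frac{p-3}{2}$ entries of each period are $0$ while the remaining entries climb $1, 2, \dots, \frac{p-3}{2}$. Concretely, this means $a_k = 0$ when $m < \frac{p-3}{2}$ and $a_k = m - \frac{p-3}{2} + 1$ when $m \geq \frac{p-3}{2}$. Here I use that $p$ is odd, so $\frac{p-3}{2}$ is an integer and this split is well defined with no ``middle'' index.

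Next I would apply Lemma \ref{lem:1}, which gives $r_{p-3}(k') = p - 4 - m$. The heart of the argument is then a single observation: reducing $k$ modulo $p-3$ and reducing its partner $k'$ sends one into the ``low half'' $\{0, \dots, \frac{p-3}{2}-1\}$ of a period exactly when the other lands in the ``high half'' $\{\frac{p-3}{2}, \dots, p-4\}$. I would verify this directly from $r_{p-3}(k') = p-4-m$: if $m < \frac{p-3}{2}$ then $p-4-m \geq \frac{p-3}{2}$, and if $m \geq \frac{p-3}{2}$ then $p-4-m < \frac{p-3}{2}$.

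With this in hand, the computation splits into the two cases of the statement. When $m < \frac{p-3}{2}$, the formula above gives $a_k = 0$, while $k'$ lands in the high half, so $a_{k'} = (p-4-m) - \frac{p-3}{2} + 1 = \frac{p-3}{2} - m$; subtracting yields $a_{k'} - a_k = \frac{p-3}{2} - r_{p-3}(k)$, the first branch. When $m \geq \frac{p-3}{2}$, instead $a_k = m - \frac{p-3}{2} + 1$ and $a_{k'} = 0$, so $a_{k'} - a_k = \frac{p-3}{2} - m - 1 = \frac{p-3}{2} - r_{p-3}(k) - 1$, the second branch.

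I expect the main obstacle to be purely bookkeeping: pinning down the piecewise formula for $a$ in terms of the remainder exactly right at the boundary index $m = \frac{p-3}{2}$, and confirming the complementary-halves claim without an off-by-one error. Nothing deep is required beyond Lemma \ref{lem:1} and the definition of $a^p$; the parity of $p$ is what guarantees the clean two-case answer.
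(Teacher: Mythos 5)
Your proposal is correct and follows essentially the same route as the paper's proof: both invoke Lemma \ref{lem:1} to place $k$ and $k'$ at complementary positions within a period of $a^p$, split into the two cases according to whether $r_{p-3}(k)$ lies in the zero half or the increasing half, and compute $a_{k'}-a_k$ from the explicit piecewise form of the sequence. Your version merely makes explicit (via the inequalities on $p-4-m$) the complementary-halves fact that the paper states more briefly as ``either $a_k = 0$ or $a_{k'} = 0$.''
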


\begin{proof}
By Lemma \ref{lem:1}, $k$ and $k'$ appear at complementary points in the sequence $a$. Thus, either $a_k=0$ or $a_{k'}=0$. This allows us to characterize $a_{k'}-a_k$ as follows.
\begin{itemize}
    \item \textbf{Case 1:} Suppose that $a_k = 0$. Then we are in the first half of $a$'s cycle, so $r_{p-3}(k) < \frac{p-3}{2}$. In this case, by Definition \ref{def:a} and Lemma \ref{lem:1}, $$a_{k'} - a_k = a_{k'} = r_{p-3}(k') + 1 - \frac{p-3}{2} = p - 3 - r_{p-3}(k) - \frac{p-3}{2} = \frac{p-3}{2} - r_{p-3}(k).$$
    \item \textbf{Case 2:} Suppose that $a_{k'} = 0$, so $r_{p-3}(k) \geq \frac{p-3}{2}$. In this case, by Definition \ref{def:a}, $$a_{k'} - a_k = -a_k = \frac{p-3}{2} - r_{p-3}(k) - 1.$$
\end{itemize}
These two cases combine to give us our desired piecewise function.
\end{proof}
Thus, we have the following definition which will enable us to identify the petal blocks of corresponding crossing labels in the unsigned Gauss code, given in Definition \ref{def:block}.
\begin{definition}\label{def:prep-block}
Define the following function.
\[ \bm{d(k)} = 2(a_{k'} - a_k) = \begin{cases} 
p - 3 - 2r_{p-3}(k) & r_{p-3}(k) < \frac{p-3}{2}, \\
p - 5 - 2r_{p-3}(k) & \textrm{otherwise}. \end{cases} \]
\end{definition}

\begin{definition}\label{def:block}
The petal block containing $k$ is given by $\bm{b(k)} = \left\lfloor \frac{k}{p-3} \right\rfloor$, and the petal block containing $k'$ is given by  $\bm{b'(k)} = r_p\left(\left\lfloor\frac{k}{p-3}\right\rfloor + d_k\right)$. 
\end{definition}

Using the machinery developed above, we now can determine the {\em signed} Gauss code from a petal permutation as follows.

\begin{theorem}
Let $P = h_0, h_1, \dots h_{p-1}$ be a petal permutation describing a given knot in terms of a $p$-petal projection. Let $C = c_0, \dots c_{p(p-3)-1}$ denote the corresponding petal unsigned Gauss code, as defined in Definition \ref{def:unsigned}.  Then a signed Gauss code corresponding to $P$ is given by the following: $C' = c'_0, \dots c'_{p(p-3)-1}$, where $c'_i = c_i$ if $h_{b(k)} < h_{b'(k)}$ and  $c'_i = -c_i$ otherwise. We call this specific Gauss code the {\bf petal signed Gauss code} for $P$. 
\end{theorem}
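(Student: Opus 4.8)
The plan is to reduce the statement to the geometric meaning of a signed Gauss code and then translate that meaning into the height comparison prescribed by the theorem. Recall that, by definition of a signed Gauss code, the entry recorded as we pass through a crossing is positive precisely when we travel along the overstrand and negative when we travel along the understrand. Thus it suffices to show that, for each index $k$, the strand we traverse at index $k$ of the petal unsigned Gauss code is the overstrand at crossing $c_k$ if and only if $h_{b(k)} < h_{b'(k)}$. (Here I read the running index $i$ of the theorem as $k$, matching the notation $b(k), b'(k)$.)

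First I would identify the two strands meeting at the crossing labeled $c_k$. Since each crossing appears exactly twice in the Gauss code, the crossing $c_k$ is visited at the indices $k$ and $k'$, where $c_k = c_{k'}$ and $k \neq k'$. By Definition \ref{def:unsigned}, the index $k$ lies in petal block $b(k) = \lfloor k/(p-3) \rfloor$, which corresponds to the height $h_{b(k)}$ in the petal permutation; likewise $k'$ lies in petal block $b(k')$, corresponding to height $h_{b(k')}$. By Definition \ref{def:block}, justified by Lemmas \ref{lem:0}--\ref{lem:2}, we have $b(k') = b'(k) = r_p\!\left(\lfloor k/(p-3)\rfloor + d_k\right)$. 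Hence the two strands through crossing $c_k$ carry the petal-permutation heights $h_{b(k)}$ and $h_{b'(k)}$, and these heights are distinct, since the two indices lie in distinct petal blocks (as in the proof of Lemma \ref{lem:1}) and $P$ is a permutation.

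Next I would invoke the height-inheritance property of split petal projections. In the original petal projection, the strand of smaller height number lies above the strand of larger height number, since by convention height $1$ is the topmost strand and height $p$ the bottom. The split construction of Figure \ref{fig:petal-projection-repair} perturbs the übercrossing into double-crossings while preserving these relative heights, so at each resulting crossing the strand of smaller height passes over the strand of larger height. Applying this to crossing $c_k$, the strand at index $k$, of height $h_{b(k)}$, is the overstrand exactly when $h_{b(k)} < h_{b'(k)}$. Combining this with the first paragraph yields $c'_k = c_k$ when $h_{b(k)} < h_{b'(k)}$ and $c'_k = -c_k$ otherwise, which is the claim.

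I expect the main obstacle to be this third step: making rigorous the assertion that the split operation preserves the height ordering at every crossing, so that the local over/under data is governed entirely by the comparison of the two block heights $h_{b(k)}$ and $h_{b'(k)}$. This rests on the geometric description of how a split petal projection is assembled from a petal projection, together with the symmetric inheritance of heights across the $p-3$ crossings associated to each strand. I would verify it by tracking a single strand of fixed height through the unfolding in Figure \ref{fig:petal-projection-repair} and checking that it remains above (respectively below) every strand of larger (respectively smaller) height at each of the double-crossings it participates in. The remaining bookkeeping, namely that $b(k')$ equals the stated expression $b'(k)$, is already supplied by Lemmas \ref{lem:0} through \ref{lem:2}, so no further computation is needed there.
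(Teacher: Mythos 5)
Your proposal is correct and takes essentially the same route the paper intends: the paper states this theorem with no separate proof, relying precisely on the machinery of Lemmas \ref{lem:0}--\ref{lem:2} and Definition \ref{def:block} to identify the petal block (hence the height) of the partner index $k'$, combined with the convention that lower-numbered heights pass over higher-numbered ones in the split petal projection. Your write-up simply makes explicit the argument the paper leaves implicit, including the geometric height-inheritance step, so it matches the paper's approach.
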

Consider, as an illustration of this theorem, Table \ref{tab:signed-gauss-code-from-algorithm}. Table \ref{tab:signed-gauss-code-from-algorithm} lists the unsigned Gauss code, the $k'$ corresponding to each $k$ (which depends only on the unsigned Gauss code by definition), the heights derived from the petal permutation of the Stevedore knot, and uses them to sign the unsigned Gauss code of a $9$-split petal projection corresponding to the Stevedore knot along the lines of Theorem $7$. 

\begin{table}[h]
    \centering
    \begin{tabular}{||c c c c c c c c c c c c c c c c c c c||} 
     \hline
     \textbf{Index (${\bm k}$)} & {\bf 0} & 1 & 2 & 3 & 4 & 5 & 6 & 7 & 8 & 9 & 10 & 11 & 12 & 13 & 14 & 15 & 16 & 17\\ 
     \hline
     ${\bm c_k}$ & {\bf 1} & 10 & 19 & 20 & 12 & 4 & 5 & 14 & 23 & 24 & 16 & 8 & 9 & 18 & 27 & 19 & 11 & 3\\ 
     \hline
     ${\bm k'}$ & {\bf 41} & 28 & 15 & 44 & 31 & 18 & 47 & 34 & 21 & 50 & 37 & 24 & 53 & 40 & 27 & 2 & 43 & 30\\
     \hline
     \textbf{P.P. Height} & {\bf 1} & 1 & 1 & 1 & 1 & 1 & 3 & 3 & 3 & 3 & 3 & 3 & 5 & 5 & 5 & 5 & 5 & 5\\
     \hline
     ${\bm c'_k}$ & {\bf 1} & 10 & 19 & 20 & 12 & 4 & 5 & 14 & -23 & 24 & 16 & 8 & 9 & 18 & 27 & -19 & 11 & -3\\ 
     \hline \hline
     \textbf{Index (${\bm k}$)} & 18 & 19 & 20 & 21 & 22 & 23 & 24 & 25 & 26 & 27 & 28 & 29 & 30 & 31 & 32 & 33 & 34 & 35\\ 
     \hline
     ${\bm c_k}$ & 4 & 13 & 22 & 23 & 15 & 7 & 8 & 17 & 26 & 27 & 10 & 2 & 3 & 12 & 21 & 22 & 14 & 6\\ 
     \hline
     ${\bm k'}$ & 5 & 46 & 33 & 8 & 49 & 36 & 11 & 52 & 39 & 14 & 1 & 42 & 17 & 4 & 45 & 20 & 7 & 48\\ 
     \hline
     \textbf{P.P. Height} & 2 & 2 & 2 & 2 & 2 & 2 & 8 & 8 & 8 & 8 & 8 & 8 & 4 & 4 & 4 & 4 & 4 & 4\\
     \hline
     ${\bm c'_k}$ & -4 & 13 & 22 & 23 & 15 & 7 & -8 & -17 & -26 & -27 & -10 & 2 & 3 & -12 & 21 & -22 & -14 & 6\\ 
     \hline\hline
     \textbf{Index (${\bm k}$)} & 36 & 37 & 38 & 39 & 40 & {\bf 41} & 42 & 43 & 44 & 45 & 46 & 47 & 48 & 49 & 50 & 51 & 52 & 53\\
     \hline
     ${\bm c_k}$ & 7 & 16 & 25 & 26 & 18 & {\bf 1} & 2 & 11 & 20 & 21 & 13 & 5 & 6 & 15 & 24 & 25 & 17 & 9\\ 
     \hline
     ${\bm k'}$ & 23 & 10 & 51 & 26 & 13 & {\bf 0} & 29 & 16 & 3 & 32 & 19 & 6 & 35 & 22 & 9 & 38 & 25 & 12\\ 
     \hline
     \textbf{P.P. Height} & 6 & 6 & 6 & 6 & 6 & {\bf 6} & 9 & 9 & 9 & 9 & 9 & 9 & 7 & 7 & 7 & 7 & 7 & 7\\ 
     \hline
     ${\bm c'_k}$ & -7 & -16 & 25 & 26 & -18 & {\bf 1} & -2 & -11 & -20 & -21 & -13 & -5 & -6 & -15 & -24 & -25 & 17 & -9\\ 
     \hline
    \end{tabular}
    \vspace{.2in}
    \caption{The signed Gauss code $c'_0,\dots,c'_{p(p-3)-1}$ of the Stevedore knot, calculated directly using the algorithm outlined above.}\label{tab:signed-gauss-code-from-algorithm}
\end{table}

We can now describe how this particular signed Gauss code for a split petal projection can help us to compute a knot's determinant from its petal permutation. We'll implement an algorithm for computing the knot determinant of split petal projections in Python. We'll test the results of our program against the those results about petal projections of all prime knots with fewer than $10$ crossings given in \cite{adams2}. We'll also test it against non-trivial knots of various sizes, from $5$-split petal projections up to $50$-split petal projections. These will appear in our appendices and will include run times over multiple trials.

\section{An algorithm to determine the colorability of a split petal projection}

\subsection{Description of the algorithm}

To determine the colorability of a knot from one of its petal permutations, $P$, we begin by labeling the arcs in the corresponding split petal projection of the knot with elements of $\mathbb{Z}/p\mathbb{Z}$. We then form a matrix from these labelings, as described Section \ref{sec:color}. Using linear algebra, we can determine the dimension of the linear system of equations' solution space (which represents the $p$-colorability of the knot). Since the trivial solutions form a space of dimension $1$, the knot is $p$-colorable if the solution space has dimension at least $2$. Let us now consider how exactly to construct this matrix, $\mathcal{M}_P$.\\

For concreteness, we will first explore the specific case of the Stevedore knot. We use the signed Gauss code to number the arcs. Recall that an arc begins and ends at an understrand -- thus arcs begin and end at negative numbers in the signed Gauss code, as follows: 
\begin{center}
\fbox{%
    \parbox{400pt}{%
    \textbf{Stevedore Knot Gauss Code:}\\
    $\bm{1}, 10,19,20,12,4,5,14,-23,24,16,8,9,18,27,-19,11,-3,-4,13,22,23,15,7,-8,-17,-26,$ $-27,-10,2,3,-12,21,-22,-14,6,-7,-16,25,26,-18,\bm{-1},-2,-11,-20,-21,-13,-5,-6,$
    $-15,-24,-25,17,-9$
    }%
}
\end{center}
\begin{center}
\fbox{%
    \parbox{400pt}{%
    \textbf{The Arc Start/End Points in the Stevedore Knot's Gauss Code:}\\
    $(-23,-19), (-19,-3), (-3,-4), (-8,-17), (-17,-26), (-26, -27), (-27, -10),(-10, -12), $ $(-12, -22), (-22, -14), (-14, -7), (-7, -18), \bm{(-18, -1)}, \bm{(-1, -2)}, (-11, -20), (-20, -21),$
    $(-21, -13), (-13, -5), (-5, -6), (-6, -15), (-15, -24),(-24, -25), (-25, -9), \bm{(-9, -23)}$
    }%
}
\end{center}

We will examine a single row of this matrix, corresponding to the crossing labeled $1$. Each of the arcs is represented by one of the columns in the matrix, numbered in the order shown above. We will place a $-1$ in the column associated with the two arcs which are understrands at crossing $1$, and a $2$ in the column representing the overstrands.

\[
\textrm{row }1\textrm{ of }\mathcal{M}_P = \begin{mpmatrix}
    0 & 0 & 0 & 0 & 0 & 0 & 0 & 0 & 0 & 0 & 0 & 0 & -1 & -1 & 0 & 0 & 0 & 0 & 0 & 0 & 0 & 0 & 0 & 0 & 0 & 0 & 2
\end{mpmatrix}
\]

The bold arcs are the three which involve crossing $1$. Notice that the two arcs that form the understrands have crossing $1$ as their start or end point. However, the arc that passes over crossing $1$ does not appear explicitly. Since one can check that $1$ is between $-9$ and $-23$ in the signed Gauss code, the arc $(-9,-23)$ is the arc that passes over crossing $1$.\\

At this point, for completeness, we will state the process we for finding the Gauss code's associated matrix in full generality: 
\begin{enumerate}
    \itemsep0em
    \item We start with a petal signed Gauss code of length $p(p-3)$, and define $\mathcal{M}$ to be a $\frac{p(p-3)}{2} \times \frac{p(p-3)}{2}$ matrix comprised entirely of $0$s.
    \item We split the Gauss code into arcs by ending the previous arc and starting a new one whenever we encounter a negative number, just as we did earlier with the Stevedore knot. 
    \item We include \textit{both endpoints} of the arc in our ``arc-sets" (which is why we say ``split" instead of ``partition"). \item We label the arcs we've created with the numbers $1$ to $\frac{p(p-3)}{2}$. For example, in the case of the Stevedore knot, the arc $(-23,-19)$ is Arc $1$, the arc $(-19,-3)$ is Arc $2$, etc.
    \item For each number $n \in \{1, \dots, \frac{p(p-3)}{2}\}$, we determine which arc is the overstrand at crossing $n$ (say Arc $i$) and which arcs are the understrands at crossing $n$ (say Arcs $j$ and $j+1$). 
    \item We set the $n$th row, $i$th column element to $2$, and the $n$th row, $j$th and $j+1$th column elements to $-1$, just we did when creating the first row of $\mathcal{M_P}$ for the Stevedore knot.
\end{enumerate}

Now, we take the first minor of this matrix (striking out the first row and column), treating the elements as members of $\mathbb{Z}$ (instead of how they were originally defined in $\mathbb{Z}/p\mathbb{Z}$), and see if this determinant is divisible by particular $p$. This approach allows us to test the $p$-colorability of the knot for all $p$ at once. Furthermore, we recall the classical result that the number of ways in which the knot is $p$-colorable is equal to the product of all $p$ in the prime factorization of this resulting minor. 
\begin{definition}
Let $\ord_p(n)$, where $p$ and $n$ are positive integers, be the largest power of $p$ that divides $n$. In other words, $k = \ord_p(n)$ is the unique nonnegative integer such that $p^k \mid n$ but $p^{k+1} \nmid n$. 
\end{definition}
The following is a well-known result \cite{livingston}.

\begin{theorem}
Let $M$ be the first minor of a coloring matrix for a knot $K$. This minor, called the {\em determinant} of the knot is an invariant. Furthermore, $K$ is colorable using $p$-colors in $p^{\ord_p(|M|)+1}$ ways. Since every knot has $p$ trivial colorings (one for each color), there are $p^{\ord_p(|M|)+1}-p$ nontrivial $p$-colorings of the knot. 
\end{theorem}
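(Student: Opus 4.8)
The plan is to recast the whole statement as a computation of the null space of the coloring matrix $\mathcal{M}$ over $\mathbb{Z}/p\mathbb{Z}$ and then extract both the exact count and the invariance of $|M|$ from the integral structure of $\mathcal{M}$. First I would observe that, by Definition \ref{def:p-colorable}, a $p$-coloring is exactly a vector $\mathbf{a}\in(\mathbb{Z}/p\mathbb{Z})^n$ with $\mathcal{M}\mathbf{a}\equiv\mathbf{0}\pmod p$; since $p$ is prime, $\mathbb{F}_p=\mathbb{Z}/p\mathbb{Z}$ is a field and the set of $p$-colorings is the kernel of $\mathcal{M}$ as an $\mathbb{F}_p$-linear map. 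Its cardinality is therefore $p^{\nu_p}$, where $\nu_p=n-\mathrm{rank}_{\mathbb{F}_p}(\mathcal{M})=\dim_{\mathbb{F}_p}\ker(\mathcal{M}\bmod p)$. Because every row of $\mathcal{M}$ has entries $2,-1,-1$ summing to $0$, the all-equal colorings form the line spanned by $(1,\dots,1)$, a $1$-dimensional subspace accounting for the $p$ trivial colorings; thus $\nu_p\ge 1$, and this is exactly the $p$ that gets subtracted in the final count.

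Next I would pin down $|M|$ and its invariance through the Smith normal form. Write $\mathcal{M}=U^{-1}DV^{-1}$ with $U,V\in GL_n(\mathbb{Z})$ and $D=\mathrm{diag}(d_1,\dots,d_{n-1},0)$, $d_1\mid\cdots\mid d_{n-1}$; the last diagonal entry is $0$ because $\mathcal{M}(1,\dots,1)^{\top}=\mathbf{0}$ forces $\det\mathcal{M}=0$, and the rank is exactly $n-1$ since the knot determinant is a nonzero (in fact odd) integer, giving $\prod_i d_i\ne 0$. The key lemma is that all first minors of $\mathcal{M}$ agree up to sign, with common absolute value $\prod_{i=1}^{n-1}d_i$. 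I would prove this from the adjugate identity $\mathcal{M}\,\mathrm{adj}(\mathcal{M})=\mathrm{adj}(\mathcal{M})\,\mathcal{M}=\det(\mathcal{M})I=0$: this forces every column of $\mathrm{adj}(\mathcal{M})$ into $\ker\mathcal{M}=\langle(1,\dots,1)\rangle$ and every row into the left kernel, so $\mathrm{adj}(\mathcal{M})$ has rank at most $1$ with constant columns, whence its entries—which are precisely the signed first minors—all share one absolute value. Applying $\mathrm{adj}$ to $\mathcal{M}=U^{-1}DV^{-1}$ and using $\det U,\det V=\pm1$ identifies that value as $\prod_{i<n}d_i=|M|$. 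Invariance of $|M|$ then follows by checking that each Reidemeister move changes $\mathcal{M}$ only by integral row/column operations together with a unimodular stabilization, all of which preserve the nonzero invariant factors.

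Finally I would assemble the count. The $\mathbb{F}_p$-nullity is $\nu_p=\#\{\,i:p\mid d_i\,\}=1+\#\{\,i<n:p\mid d_i\,\}$, the leading $1$ coming from $d_n=0$. Comparing with the target exponent $\ord_p(|M|)+1=1+\sum_{i<n}\ord_p(d_i)$ shows that the two agree exactly when every invariant factor divisible by $p$ is divisible by $p$ to the \emph{first} power only. Granting this, the number of $p$-colorings is $p^{\ord_p(|M|)+1}$, and removing the $p$ trivial colorings leaves $p^{\ord_p(|M|)+1}-p$ nontrivial ones.

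I expect this last reconciliation to be the main obstacle, and it is the one genuinely delicate point: $\ord_p(|M|)$ counts prime factors \emph{with multiplicity}, whereas the dimension $\nu_p$ sees each invariant factor \emph{once}. The two coincide—so the enumeration holds verbatim—whenever the $p$-part of $|M|$ lies in a single cyclic invariant factor, in particular whenever $|M|$ is $p$-square-free; in full generality the correct exponent replaces $\ord_p(|M|)$ by the number of invariant factors of $\mathcal{M}$ divisible by $p$. I would emphasize that this caveat touches only the precise \emph{count} in the final sentence: for the paper's actual use—computing the determinant $|M|$ and deciding $p$-colorability, which is the unconditional statement $p\mid|M|\iff\nu_p\ge 2$—no such hypothesis is needed.
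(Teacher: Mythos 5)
First, a structural point: the paper does not prove this theorem at all --- it is quoted as a well-known result with a citation to \cite{livingston} --- so there is no internal argument to compare yours against; your proposal must stand as a proof on its own. Your framework (colorings as $\ker(\mathcal{M} \bmod p)$ over $\mathbb{F}_p$, Smith normal form, count $p^{\nu_p}$ where $\nu_p$ is the number of invariant factors divisible by $p$, including the trailing $0$) is the right one, and the ``delicate point'' you flag at the end is not a defect of your proof: it is a defect of the theorem. As stated, the count $p^{\ord_p(|M|)+1}$ is false in general, and your analysis shows exactly why. A concrete counterexample sits in the paper's own table: $6_1$ has determinant $9$, but it is a twist knot (hence $2$-bridge), so its coloring module is cyclic --- the nonzero invariant factors of $\mathcal{M}$ are $1,\dots,1,9$ --- giving $\nu_3 = 2$ and exactly $3^2 = 9$ three-colorings ($6$ nontrivial), not $3^3 = 27$ ($24$ nontrivial) as the theorem asserts. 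The correct exponent is $\nu_p$, precisely as you say, and the statements the paper actually relies on (invariance of $|M|$, and ``$p$-colorable $\iff p \mid |M|$'', i.e. $\nu_p \geq 2$) are unaffected. One internal slip: you claim the two exponents agree ``whenever the $p$-part of $|M|$ lies in a single cyclic invariant factor''; that is exactly the $6_1$ situation, where they disagree. The correct criterion is the one in your preceding sentence: every invariant factor divisible by $p$ is divisible by $p$ exactly once (in particular, $|M|$ being $p$-square-free suffices).

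Second, there is a genuine gap in your adjugate argument for well-definedness of $|M|$. From $\mathcal{M}\,\mathrm{adj}(\mathcal{M}) = 0$ and $\ker\mathcal{M} = \langle(1,\dots,1)\rangle$ you correctly conclude that each column of $\mathrm{adj}(\mathcal{M})$ is constant, so $\mathrm{adj}(\mathcal{M}) = \mathbf{1}u^{\top}$ with $u$ spanning the left kernel; this proves the minor is independent, up to sign, of which \emph{column} is deleted. But the entries of this rank-one matrix in different rows are $u_1,\dots,u_n$, so independence of the deleted \emph{row} requires the left kernel to be spanned by a vector all of whose entries are $\pm 1$, which does not follow from anything you wrote. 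Note that the row sums of $\mathcal{M}$ vanish but the column sums generally do not (the column sum for an arc is $2o_j - 2$, where $o_j$ is the number of crossings at which that arc is the overstrand), so the left kernel is not spanned by $\mathbf{1}$ and there is no symmetry shortcut. The missing ingredient is the redundancy of the Wirtinger relations with unit coefficients --- e.g. via Fox calculus evaluated at $t=-1$, the rows of $\mathcal{M}$ satisfy a linear relation whose coefficients are all $\pm 1$ --- which supplies the required left-kernel vector. Your Reidemeister-invariance step is likewise only a sketch, but that is a reasonable level of detail given that the paper delegates the entire proof to the literature.
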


We now turn our attention to directly implementing the algorithm for determining a knot's determinant from one of its petal permutations in Python.

\subsection{Implementing the algorithm in Python.}
The first step in implementing the algorithm is to create a library of functions: one for determining the unsigned petal Gauss code from the petal number of the split petal projection, one for deriving the (signed) petal Gauss code from the unsigned code and the petal permutation, one for generating the petal Gauss code matrix from the petal Gauss code, and one for evaluating the knot determinant from this matrix. Figure 19 visually shows the way these functions fit together. \\

\begin{figure}[h]
\begin{center}
    \includegraphics[width=0.45\textwidth]{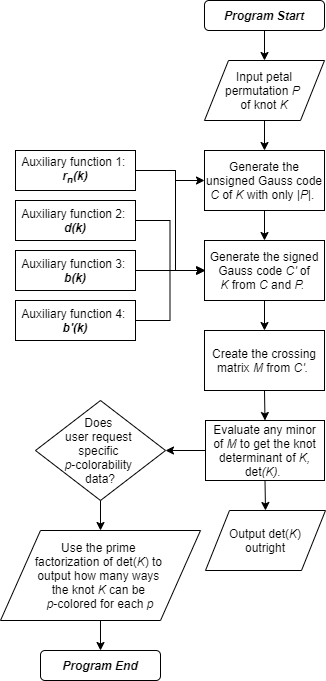}
\end{center}
    \caption{A visualization of the algorithm.}
    \label{fig:alg}
\end{figure}

How these functions are written can vary. Ours are written for readability first and speed second. We have included our full code (with documentation) in Appendix \ref{app:code}. We compute the determinants of all prime knots with fewer than 10 crossings from their petal permutations, given in \cite{adams1}, with runtime in Appendix \ref{app:prime}. We were also interested in discovering how our algorithm performs as we increase the size of the petal projection. Current bounds on the relationship between crossing number and petal number will mean that our experiments will deal with knots of very large size, pushing the code to its limits. This is further explored in Appendix \ref{app:limits}. Finally, we will do some experimental testing of the distribution of $p$-colorability of random knots using randomly selected permutations in Appendix \ref{app:dist}. 

\section{Conclusion}
Currently, the fastest determinant algorithms have complexity $\approx O(n^{2.373})$ and $n$ is $O(p^2)$, where $p$ is the petal number of the knot. This means that our algorithm should have complexity $\approx O(p^{4.746})$, since everything else we do has strictly less complexity. We do not know how this is related to crossing number, the standard measure of complexity for knots. However, Colin Adams et. al \cite{adams2} have proven that the crossing number, $c(K)$, is bounded above by $\frac{p(K)^2 - 2p(K) - 3}{4}$. Indeed, torus knots of the form $T_{r, r+1}$ realize the inequality, showing that this bound is tight. This is good news: it suggests that, at least for some torus knots, our algorithm can have sub-cubic complexity with respect to crossing number.\\

There are still more interesting questions we can ask in this setting. For instance, 
\begin{enumerate}
    \item Is there a faster (less computationally complex) way to compute the knot determinant than evaluating the first minor of the petal Gauss code matrix that takes further advantage of the symmetry of petal projections?
    \item Can one replicate this work using a wider class of objects (than elements of $\mathbb{Z}/p\mathbb{Z}$) to label the arcs of the split petal projection? For instance, can quandles be used in conjunction with petal projections to produce interesting knot invariants?
\end{enumerate}

We hope that this paper encourages continued interest in the possibilities of petal projections. Viewing knots via their petal projections has the potential lead to more connections between knot theory, combinatorics, and algebra, just as it has led to connections between knot theory and probability theory in \cite{random}.

\section*{Acknowledgements} 

The authors would like to thank the Summer Institute of Mathematics at the University of Washington (SIMUW) for bringing us together to work on this project. In particular, we would like to thank S\'{a}ndor Kov\'{a}cs, Ron Irving, Jim Morrow, Kelly Emmrich, Shea Engle, Guangqiu Liang, Wilson Ly, and Andreas Quist for their leadership and support. We would also like to thank Andrew Lebedinsky and Gregory Baimetov, SIMUW participants, for contributing early ideas to the project. The authors would also like to thank the Simons Foundation (\#426566,
Allison Henrich) for their support of this research. Finally, the authors would like to thank their anonymous reviewer from \textit{Involve} for valuable feedback and detailed suggestions.

\appendix 
\section{Code}\label{app:code}
Any of the code we used in calculations for creating our appendices is included here. All of our code will also be available on \href{https://github.com/RobinTruax/PetalProjections}{Github (clickable)} at the URL \url{github.com/RobinTruax/PetalProjections}. 
\begin{lstlisting}[language=Python, caption=Contains an implementation of our algorithm]
#Importing the math library, SymPy (for prime factorization of natural numbers), and Numpy (for determinant calculation)
import math
import numpy
from sympy.ntheory import factorint

#Auxiliary Function 1: r_n(k)
def r(k,n): 
    return k%n

#Auxiliary Function 2: d(k), which depends on the petal number |P|
def d(k, petal_number): 
    if 2*r(k, petal_number-3) < (petal_number-3): 
        return petal_number - 3 - 2*r(k, petal_number-3)
    return petal_number - 5 - 2*r(k, petal_number-3)

#Auxilary Function 3: b(k), which depends on the petal number |P|
def b(k, petal_number): 
    return math.floor(k/(petal_number - 3))

#Auxiliary Function 4: b'(k), which depends on the petal number |P|
def bPrime(k, petal_number): 
    return r(b(k, petal_number)+d(k, petal_number),petal_number)

#Building Block Function 1: Generates the unsigned Gauss Code, depends only on the petal number |P|
def unsignedGaussCode(petal_number): 
    L_array = [i for i in range(int((petal_number-3)/2))] + [int((petal_number-5)/2) - i for i in range(int((petal_number-3)/2))]
    a_array = [0 for i in range(int((petal_number-3)/2))] + [i+1 for i in range(int((petal_number-3)/2))]
    n_array = [r(int((petal_number-1)/2)*math.floor(k/(petal_number-3)) + a_array[r(k,petal_number-3)],petal_number)+1 for k in range(petal_number*(petal_number-3))]

    unsigned_gauss_code = [petal_number*L_array[r(k, petal_number -3)] + n_array[k] for k in range(petal_number*(petal_number-3))]
    return unsigned_gauss_code

#Building Block Function 2: Generates the signed Gauss Code, requires unsigned Gauss code and petal permutation
def signGaussCode(Gauss_code, petal_permutation): 
    petal_number = len(petal_permutation)
    for k in range(petal_number*(petal_number - 3)): 
        if(petal_permutation[b(k, petal_number)] < petal_permutation[bPrime(k, petal_number)]): 
            pass
        else: 
            Gauss_code[k] *= -1
    return Gauss_code

#Building Block Function 3: Splits up a signed Gauss code into an aray of arcs
def splitSignedGaussCode(signed_Gauss_code): 
    #Preparing to split up the signed Gauss code into arcs
    arcs = []
    current_arc = []

    #Creating a list to scan through that starts with the first negative number
    trawl = list(range(len(signed_Gauss_code)))
    for i in range(len(signed_Gauss_code)): 
        if signed_Gauss_code[i] < 0: 
            trawl = trawl[i:] + trawl[:i]
            break

    #Splitting the signed Gauss code up into arcs
    current_arc.append(signed_Gauss_code[trawl[0]])
    for i in trawl[1:]: 
        current_arc.append(signed_Gauss_code[i])
        if signed_Gauss_code[i] < 0: 
            arcs.append(current_arc)
            current_arc = []
            current_arc.append(signed_Gauss_code[i])
    current_arc.append(signed_Gauss_code[trawl[0]])
    arcs.append(current_arc)

    return(arcs)

#Building Block Function 4: Generates the crossing matrix, requires the signed Gauss code and petal permutation
def createCrossingMatrix(signed_Gauss_code, petal_permutation): 
    #Initializing everything
    petal_number = len(petal_permutation)
    matrix_size = math.floor(petal_number*(petal_number-3)/2)
    crossing_matrix = [[0 for i in range(matrix_size)] for i in range(matrix_size)]

    arcs = splitSignedGaussCode(signed_Gauss_code)

    for crossing in range(1, len(crossing_matrix)+1): 
        for arc in range(len(arcs)): 
            if crossing in arcs[arc]: 
                crossing_matrix[crossing-1][arc] = 2
            if -1*crossing in arcs[arc]: 
                crossing_matrix[crossing-1][arc] = -1

    return crossing_matrix

#Building Block Function 5: Evaluates the first minor of the crossing matrix
def evaluateKnotDeterminant(crossing_matrix): 
    #This part slices off the first crossing (first step of getting the minor)
    crossing_matrix = crossing_matrix[1:]

    #This part slices off the first arc in each crossing (second step of getting the minor)
    for crossing in range(len(crossing_matrix)): 
        crossing_matrix[crossing] = crossing_matrix[crossing][1:]
    
    #This part converts our 2D array into a numpy array and then evaluates the determinant
    crossing_numpy_matrix = numpy.array(crossing_matrix)
    return abs(int(round(numpy.linalg.det(crossing_numpy_matrix))))

#This function directly generates the knot determinant of a petal projection with a given petal permutation
def knotDeterminant(petal_permutation): 
    unsigned = unsignedGaussCode(len(petal_permutation))
    signed = signGaussCode(unsigned, petal_permutation)
    crossing_matrix = createCrossingMatrix(signed, petal_permutation)
    return evaluateKnotDeterminant(crossing_matrix)

#This function presents what the knot determinant implies about the possible colorings of the knot
def presentDeterminant(determinant): 
    print("The knot's determinant is " + str(determinant) + ".")
    factorization = factorint(determinant)
    for i in factorization: 
        print("Since " + str(i) + " appears " + str(factorization[i]) + " time(s) in the prime factorization of " + str(determinant) + ", there are " + str(i) + "^(" + str(factorization[i]) + "+1) - " + str(i) + " = " + str(pow(i,factorization[i]+1)-i) + " nontrivial " + str(i) + "-colorings of the knot.")
    print("These are all nontrivial colorings of the knot, though there are p trivial colorings for every prime p.")
\end{lstlisting}
To test it, we use the following code: 
\begin{lstlisting}[language=Python, caption=Code for testing the algorithm by using the Stevedore knot.]
#Here, we import the code from Listing 1.
import spp_library

#The petal permutation of the Stevedore knot.
petal_permutation = [1, 3, 5, 2, 8, 4, 6, 9, 7]
petal_number = len(petal_permutation)
unsigned = spp_library.unsignedGaussCode(petal_number)
signed = spp_library.signGaussCode(unsigned, petal_permutation)
arcs = spp_library.splitSignedGaussCode(signed)
crossing_matrix = spp_library.createCrossingMatrix(signed, petal_permutation)
knotDeterminant = spp_library.evaluateKnotDeterminant(crossing_matrix)

print("")
print("You asked for analysis of the knot with petal permutation " + str(petal_permutation))
spp_library.presentDeterminant(knotDeterminant)
print("")
\end{lstlisting}

We also include the following code, which we used with minor modifications in the calculations for our other appendices. This code tests a list of petal projections and returns their knot determinant and the runtime (in ms) of each calculation.
\begin{lstlisting}[language=Python, caption=Code for testing the algorithm on sets of petal projections]
#Here, we import the code from Listing 1 and an algorithm timing library in Python.
import spp_library
import timeit
petal_projections_to_test = [[1, 3, 5, 2, 4],[1, 3, 5, 2, 7, 4, 6],[1, 3, 5, 2, 8, 4, 6, 9, 7]]
number_of_runs_each = 100

def mean(runtimes): 
    return sum(runtimes) / len(runtimes) 

for petal_projection in petal_projections_to_test: 
    #Compute the knot determinant
    petal_projection_determinant = spp_library.knotDeterminant(petal_projection)

    alg_runtimes = []
    #Run the petal projection calculation number_of_runs_each times and get a runtime list
    for i in range(number_of_runs_each): 
        start = timeit.default_timer()
        spp_library.knotDeterminant(petal_projection)
        alg_runtimes.append(1000*(timeit.default_timer() - start))

    #Print everything
    print("Petal Projection: " + str(petal_projection) + " | " + str(petal_projection_determinant) + "&" + str(round(min(alg_runtimes),4)) + "&" + str(round(mean(alg_runtimes),4)))
\end{lstlisting}

Note: all tests were conducted using the same computer, which uses a Ryzen 5 1600 CPU and GTX 1060 graphics card. Given an optimized and/or more powerful computer, a quicker language (ex. C/C++), or both, all of these runtimes could be significantly reduced. Additionally, this code is not optimized to make use of multithreading -- an implementation which did take advantage of multithreading could be orders of magnitude faster.

\section{The \textit{p}-colorability of All Prime Knots with Fewer Than 10 Crossings}\label{app:prime}
Again, we source our list of the prime knots with fewer than $10$ crossings in petal projection from \cite{adams1}. The minimum and average runtimes are collected from a total of $100$ attempts per knot in the list.
\newpage

\begin{center}
\begin{tabular}{|c|c|c|c|c|c|}
 \hline 
 Knot & $p(K)$ & A minimal petal representation & Determinant & Min. runtime (ms) & Avg. runtime (ms) \\ 
  \hline 
 $3_1$ & 5 & (1, 3, 5, 2, 4) & 3 & 0.0737 & 0.0769 \\
 $4_1$ & 7 & (1, 3, 5, 2, 7, 4, 6) & 5 & 0.1931 & 0.1990 \\
 $5_1$ & 7 & (1, 3, 6, 2, 5, 7, 4) & 5 & 0.1928 & 0.2009 \\
 $5_2$ & 7 & (1, 3, 6, 2, 4, 7, 5) & 7 & 0.1992 & 0.2040 \\
 $6_1$ & 9 & (1, 3, 5, 2, 8, 4, 6, 9, 7) & 9 & 0.5329 & 0.5821 \\
 $6_2$ & 9 & (1, 3, 5, 2, 8, 4, 7, 9, 6) & 11 & 0.5295 & 0.5968 \\
 $6_3$ & 9 & (1, 3, 5, 2, 9, 7, 4, 8, 6) & 13 & 0.5341 & 0.6007 \\
 $7_1$ & 9 & (1, 8, 4, 9, 5, 3, 6, 2, 7) & 7 & 0.5417 & 0.5917 \\
 $7_2$ & 9 & (1, 3, 6, 9, 7, 2, 4, 8, 5) & 11 & 0.5366 & 0.5845 \\
 $7_3$ & 9 & (1, 3, 6, 9, 7, 2, 5, 8, 4) & 13 & 0.5317 & 0.5887 \\
 $7_4$ & 9 & (1, 3, 6, 4, 8, 2, 5, 9, 7) & 15 & 0.5298 & 0.5768 \\
 $7_5$ & 9 & (1, 3, 6, 4, 8, 2, 7, 9, 5) & 17 & 0.5414 & 0.5953 \\
 $7_6$ & 9 & (1, 3, 6, 4, 9, 7, 2, 8, 5) & 19 & 0.5343 & 0.6008 \\
 $7_7$ & 9 & (1, 3, 7, 9, 4, 6, 2, 8, 5) & 21 & 0.5268 & 0.5884 \\
 $8_1$ & 11 & (1, 3, 5, 2, 8, 11, 9, 4, 6, 10, 7) & 13 & 1.5281 & 2.9710 \\
 $8_2$ & 11 & (1, 3, 5, 2, 9, 4, 7, 11, 8, 10, 6) & 17 & 1.5175 & 2.8166  \\
 $8_3$ & 11 & (1, 3, 6, 2, 9, 5, 11, 4, 7, 10, 8) & 17 & 1.5194 & 2.8753  \\
 $8_4$ & 11 & (1, 3, 5, 8, 6, 2, 10, 4, 7, 11, 9) & 19 & 1.4199 & 3.0731  \\
 $8_5$ & 11 & (1, 3, 5, 8, 6, 11, 9, 2, 10, 4, 7) & 21 & 1.5375 & 3.0680  \\
 $8_6$ & 11 & (1, 3, 5, 2, 8, 6, 10, 4, 9, 11, 7) & 23 & 1.5763 & 3.8458  \\
 $8_7$ & 11 & (1, 3, 5, 2, 10, 7, 4, 8, 11, 9, 6) & 23 & 1.5653 & 3.5296  \\
 $8_8$ & 11 & (1, 3, 5, 2, 8, 6, 11, 9, 4, 10, 7) & 25 & 1.4279 & 3.3398  \\
 $8_9$ & 11 & (1, 3, 5, 9, 2, 7, 11, 6, 4, 10, 8) & 25 & 1.6110 & 3.6660  \\
 $8_{10}$ & 11 & (1, 3, 5, 2, 9, 7, 11, 8, 4, 10, 6) & 27 & 1.5062 & 3.5207  \\
 $8_{11}$ & 11 & (1, 3, 5, 2, 8, 11, 9, 4, 7, 10, 6) & 27 & 1.6855 & 3.5786  \\
 $8_{12}$ & 11 & (1, 3, 5, 2, 9, 11, 8, 6, 10, 4, 7) & 29 & 1.4598 & 3.4152  \\
 $8_{13}$ & 11 & (1, 3, 5, 2, 10, 7, 4, 9, 11, 8, 6) & 29 & 1.5943 & 3.3468  \\
 $8_{14}$ & 11 & (1, 3, 5, 2, 10, 8, 11, 6, 9, 4, 7) & 31 & 1.8390 & 3.8781  \\
 $8_{15}$ & 11 & (1, 3, 5, 2, 8, 11, 7, 9, 4, 10, 6) & 33 & 1.5407 & 3.5597  \\
 $8_{16}$ & 11 & (1, 3, 5, 8, 6, 2, 11, 9, 4, 10, 7) & 35 & 1.6293 & 3.7273  \\
 $8_{17}$ & 11 & (1, 3, 5, 8, 6, 2, 10, 4, 9, 11, 7) & 37 & 1.4885 & 3.4082  \\
 $8_{18}$ & 11 & (1, 3, 7, 4, 10, 2, 8, 6, 11, 9, 5) & 45 & 1.6567 & 3.6826  \\
 \hline
\end{tabular}
\end{center}

\begin{center}
 \begin{tabular}{|c|c|c|c|c|c|}
  \hline 
 Knot & $p(K)$ & A minimal petal representation & Determinant & Min. runtime (ms) & Avg. runtime (ms) \\ 
 \hline 
 $8_{19}$ & 7 & (1, 4, 7, 3, 6, 2, 5) & 3&0.1811&0.1994\\
 $8_{20}$ & 9 & (1, 3, 5, 8, 2, 6, 9, 4, 7) & 9&0.5331&0.5799\\
 $8_{21}$ & 9 & (1, 3, 5, 8, 2, 7, 4, 9, 6) & 15&0.5282&0.5863\\
 $9_{1}$ & 11 & (1, 10, 5, 11, 6, 4, 7, 3, 8, 2, 9) & 9&1.5855&3.3514\\
 $9_{2}$ & 11 & (1, 3, 6, 10, 7, 2, 4, 8, 11, 9, 5) & 15&1.4888&3.0756\\
 $9_{3}$ & 11 & (1, 3, 7, 5, 9, 2, 6, 11, 8, 10, 4) & 19&1.4289&3.0935\\
 $9_{4}$ & 11 & (1, 3, 6, 10, 7, 2, 5, 8, 11, 9, 4) & 21&1.5295&3.4236\\
 $9_{5}$ & 11 & (1, 3, 6, 4, 8, 11, 9, 2, 5, 10, 7) & 23&1.4775&2.4631\\
 $9_{6}$ & 11 & (1, 3, 6, 4, 9, 2, 7, 11, 8, 10, 5) & 27&1.4985&2.6101\\
 $9_{7}$ & 11 & (1, 3, 6, 10, 7, 2, 4, 9, 11, 8, 5) & 29&1.6424&3.4747\\
 $9_{8}$ & 11 & (1, 3, 6, 10, 8, 4, 11, 7, 2, 9, 5) & 31&1.5704&3.2222\\
 $9_{9}$ & 11 & (1, 3, 6, 10, 7, 2, 5, 9, 11, 8, 4) & 31&1.4988&3.2292\\
 $9_{10}$ & 11 & (1, 3, 7, 5, 8, 11, 9, 2, 6, 10, 4) & 33&1.4525&3.3888\\
 $9_{11}$ & 11 & (1, 3, 6, 4, 10, 7, 2, 8, 11, 9, 5) & 33&1.6525&3.2743\\
 $9_{12}$ & 11 & (1, 3, 6, 10, 5, 7, 2, 8, 11, 9, 4) & 35&1.6505&3.5714\\
 $9_{13}$ & 11 & (1, 3, 6, 4, 9, 2, 5, 11, 8, 10, 7) & 37&1.5017&3.437\\
 $9_{14}$ & 11 & (1, 3, 7, 10, 5, 2, 9, 11, 8, 4, 6) & 37&1.6414&3.6194\\
 $9_{15}$ & 11 & (1, 3, 6, 4, 10, 8, 2, 7, 11, 9, 5) & 39&1.5063&3.4034\\
 $9_{16}$ & 11 & (1, 3, 7, 4, 10, 2, 9, 11, 6, 8, 5) & 39&1.6249&3.6076\\
 $9_{17}$ & 11 & (1, 3, 7, 10, 4, 6, 2, 9, 11, 8, 5) & 39&1.5607&3.9702\\
 $9_{18}$ & 11 & (1, 3, 6, 4, 8, 11, 9, 2, 7, 10, 5) & 41&1.4809&3.4053\\
 $9_{19}$ & 11 & (1, 3, 7, 5, 9, 11, 4, 8, 2, 10, 6) & 41&1.5197&3.4281\\
 $9_{20}$ & 11 & (1, 3, 6, 4, 10, 8, 2, 9, 5, 11, 7) & 41&1.4670&3.3930\\
 $9_{21}$ & 11 & (1, 3, 6, 4, 10, 7, 2, 9, 11, 8, 5) & 43&1.4145&3.1027\\
 $9_{22}$ & 11 & (1, 3, 6, 4, 9, 7, 2, 10, 5, 11, 8) & 43&1.7543&3.6143\\
 $9_{23}$ & 11 & (1, 3, 6, 4, 9, 11, 7, 2, 8, 10, 5) & 45&1.5742&3.742\\
 $9_{24}$ & 11 & (1, 3, 6, 11, 5, 7, 2, 9, 4, 10, 8) & 45&1.5370&3.3379\\
 $9_{25}$ & 11 & (1, 3, 6, 4, 8, 11, 7, 9, 2, 10, 5) & 47&1.5600&3.6257\\
 $9_{26}$ & 11 & (1, 3, 7, 5, 10, 6, 2, 9, 11, 4, 8) & 47&1.6188&3.7474\\
 $9_{27}$ & 11 & (1, 3, 6, 4, 11, 7, 2, 8, 10, 5, 9) & 49&1.7437&3.6646\\
 $9_{28}$ & 11 & (1, 3, 6, 11, 5, 7, 2, 8, 10, 4, 9) & 51&1.6539&3.8004\\
 $9_{29}$ & 11 & (1, 3, 6, 4, 10, 7, 2, 8, 5, 11, 9) & 51&1.5392&3.9865\\
 $9_{30}$ & 11 & (1, 3, 6, 4, 10, 8, 2, 7, 11, 5, 9) & 53&1.7512&4.2598\\
 $9_{31}$ & 11 & (1, 3, 6, 10, 5, 7, 2, 9, 11, 4, 8) & 55&1.4685&3.7721\\
 $9_{32}$ & 11 & (1, 3, 6, 4, 9, 11, 7, 2, 10, 5, 8) & 59&1.6178&3.5188\\
 $9_{33}$ & 11 & (1, 3, 6, 4, 10, 2, 7, 11, 9, 5, 8) & 61&1.4888&3.6090\\
 $9_{34}$ & 13 & (1, 3, 7, 9, 13, 5, 11, 8, 2, 4, 6, 10, 12) & 69&4.5353&11.4473\\
 $9_{35}$ & 11 & (1, 3, 10, 6, 2, 9, 11, 8, 5, 7, 4) & 27&1.4939&2.7999\\
 $9_{36}$ & 11 & (1, 3, 6, 4, 9, 7, 11, 8, 2, 10, 5) & 37&1.5997&4.0018\\
 $9_{37}$ & 11 & (1, 3, 7, 10, 4, 6, 2, 8, 11, 9, 5) & 45&1.7504&3.8878\\
 $9_{38}$ & 11 & (1, 3, 6, 4, 9, 2, 7, 11, 5, 10, 8) & 57&1.6299&3.6175\\
 $9_{39}$ & 11 & (1, 3, 6, 4, 10, 2, 7, 9, 5, 11, 8) & 55&1.5260&3.7586\\
 $9_{40}$ & 13 & (1, 11, 7, 5, 13, 2, 10, 8, 6, 12, 4, 9, 3) & 75&5.2471&12.1151\\
 $9_{41}$ & 11 & (1, 3, 7, 11, 4, 8, 10, 6, 2, 9, 5) & 49&1.5291&3.5573\\
 $9_{42}$ & 9 & (1, 3, 6, 2, 9, 5, 8, 4, 7) & 7&0.5351&0.5776\\
 $9_{43}$ & 9 & (1, 3, 6, 9, 5, 8, 2, 7, 4) & 13&0.5262&0.6009\\
 $9_{44}$ & 9 & (1, 3, 6, 9, 4, 7, 2, 8, 5) & 17&0.5248&0.5718\\
 $9_{45}$ & 9 & (1, 3, 7, 4, 9, 6, 2, 8, 5) & 23&0.5282&0.5999\\
 $9_{46}$ & 9 & (1, 3, 6, 9, 5, 2, 8, 4, 7) & 9&0.5362&0.6032\\
 $9_{47}$ & 11 & (1, 3, 5, 7, 10, 4, 9, 6, 2, 11, 8) & 27&1.5552&3.4711\\
 $9_{48}$ & 11 & (1, 3, 5, 2, 9, 11, 7, 4, 10, 6, 8) & 27&1.6477&3.7317\\
 $9_{49}$ & 11 & (1, 3, 5, 2, 7, 11, 8, 4, 10, 6, 9) & 25&1.5622&3.8258\\
  \hline
\end{tabular}
\end{center}
\section{Finding the Limits of the Algorithm}\label{app:limits}
First, we test our program 1000 times against the petal projection $(1,2,\dots,n)$ for various (odd) $n$. Note that the result will always be $1$, as this petal projection degenerates into the unknot via petal cancellation.\\

\begin{center}
\begin{tabular}{|c|c|c|}
 \hline 
 $n$ & Min. runtime (ms) & Avg. runtime (ms) \\ 
  \hline 
 $11$ & 1.3715 & 2.0532  \\
 $25$ & 34.1978 & 60.7494  \\
 $51$ & 501.0886 & 549.8304  \\
 $75$ & 2255.2766 & 2341.1118  \\
 $101$ & 7693.6962 & 7919.2274  \\
 \hline
\end{tabular}
\end{center}

Next, we test our program against 1000 random petal permutations with petal number $n$ -- that is, random elements of $S_n$ for various (odd) $n$. Here we expect a variety of determinants, but we cannot list all of them here.\\

\begin{center}
\begin{tabular}{|c|c|c|}
 \hline 
 $n$ & Min. runtime (ms) & Avg. runtime (ms) \\ 
  \hline 
 $11$ & 1.5544 & 3.7661  \\
 $25$ & 45.0358 & 89.2748  \\
 $51$ & 513.6894 & 576.6921  \\
 $75$ & 2320.5901 & 2425.1528  \\
 $101$ & 8086.1688 & 8339.2420  \\
 \hline
\end{tabular}
\end{center}

Clearly, this algorithm suffices to compute the determinant of any ``reasonably-sized" petal projection in a fairly short amount of time.

\section{Distribution of \textit{p}-colorability Amongst Randomly Generated Petal Projections}\label{app:dist}

Finally, we will offer an empirical description of the proportion of petal permutations with petal number $n$ that are $p$-colorable for $p \leq 23$. For example, the $n=5$, $p=3$ entry is 8.3\%, so approximately 8.3\% of the $5$-petal projections are $3$-colorable. The ``NC" column refers to the approximate percentage of petal permutations with petal number $n$ that are not $p$-colorable for any $p$.\\

For sufficiently small $n$ (i.e. $n$ such that $n! < 1000$), we simply test every permutation of $\{1, \dots, n\}$. Otherwise, we select $1000$ random permutations of $\{1, \dots, n\}$, find the knot determinant of the corresponding petal projection, and calculate what percentage are $p$-colorable for each $p$ that appears in the prime factorization of any of them. This is a \textit{very} rough approximation: we simply use this for intuition and to generate further questions related to petal projection determinants.
\newpage

\begin{center}
\begin{tabular}{|c|c|c|c|c|c|c|c|c|c|c|c|c|c|c|c|}
 \hline 
 $\mathbf{n}$ & NC & $\bf 3$ & $\bf 5$ & $\bf 7$ & $\bf 11$ & $\bf 13$ & $\bf 17$ & $\bf 19$ & $\bf 23$ \\ 
 \hline 
{\bf 1} & 100 & 0.0 & 0.0 & 0.0 & 0.0 & 0.0 & 0.0 & 0.0 & 0.0 \\
{\bf 3} & 100 & 0.0 & 0.0 & 0.0 & 0.0 & 0.0 & 0.0 & 0.0 & 0.0 \\
{\bf 5} & 91.7 & 8.3 & 0.0 & 0.0 & 0.0 & 0.0 & 0.0 & 0.0 & 0.0 \\
{\bf 7} & 64.5 & 22.4 & 6.3 & 5.0 & 0.2 & 0.2 & 0.0 & 0.0 & 0.0 \\
{\bf 9} & 50.2 & 25.0 & 10.2 & 7.3 & 2.7 & 1.1 & 0.6 & 0.2 & 0.2 \\
{\bf 11} & 38.8 & 30.0 & 16.0 & 7.6 & 4.6 & 2.6 & 1.7 & 1.1 & 0.8 \\
{\bf 13} & 24.6 & 32.3 & 17.6 & 11.6 & 5.2 & 4.1 & 2.8 & 2.1 & 1.4 \\
{\bf 15} & 20.3 & 37.0 & 20.3 & 12.7 & 6.4 & 4.7 & 4.3 & 3.0 & 2.4 \\
{\bf 17} & 11.8 & 33.1 & 17.7 & 14.1 & 8.6 & 6.5 & 4.9 & 4.6 & 3.6 \\
{\bf 19} & 8.7 & 34.5 & 20.1 & 14.6 & 8.1 & 8.4 & 4.2 & 4.6 & 2.5 \\
{\bf 21} & 5.5 & 36.2 & 20.3 & 16.3 & 8.0 & 6.5 & 5.8 & 4.1 & 3.1 \\
{\bf 23} & 2.7 & 37.5 & 21.2 & 15.2 & 9.4 & 8.2 & 5.1 & 4.9 & 4.6 \\
{\bf 25} & 1.8 & 35.7 & 22.3 & 12.5 & 8.4 & 6.4 & 4.3 & 4.1 & 3.7 \\
{\bf 27} & 0.5 & 35.6 & 22.2 & 15.8 & 9.2 & 8.2 & 5.3 & 5.1 & 5.7 \\
{\bf 29} & 0.5 & 37.2 & 21.6 & 16.9 & 8.8 & 8.3 & 6.5 & 5.8 & 4.1 \\
{\bf 31 } & 0.2 & 35.4 & 19.1 & 17.4 & 8.4 & 8.3 & 6.0 & 5.1 & 4.0 \\
{\bf 33 } & $\sim 0.0$ & 35.4 & 21.1 & 12.0 & 9.6 & 9.0 & 6.0 & 5.4 & 4.6 \\
{\bf 35 } & $\sim 0.1$ & 37.0 & 21.5 & 15.2 & 9.5 & 8.3 & 5.3 & 4.8 & 4.0 \\
{\bf 37 } & $\sim 0.0$ & 36.8 & 22.2 & 15.3 & 10.8 & 5.9 & 5.5 & 3.9 & 4.8 \\
{\bf 39 } & $\sim 0.0$ & 35.9 & 20.6 & 16.7 & 9.5 & 7.2 & 6.5 & 3.8 & 4.6 \\
{\bf 41 } & $\sim 0.0$ & 34.8 & 19.5 & 16.0 & 10.5 & 6.3 & 5.4 & 6.1 & 4.9 \\
{\bf 43 } & $\sim 0.0$ & 38.3 & 21.5 & 13.8 & 8.9 & 9.3 & 5.6 & 5.1 & 4.4 \\
{\bf 45 } & $\sim 0.0$ & 35.7 & 20.1 & 13.5 & 8.4 & 6.7 & 6.1 & 5.0 & 4.2 \\
{\bf 47 } & $\sim 0.0$ & 35.1 & 20.7 & 13.5 & 8.5 & 8.3 & 5.3 & 5.2 & 3.7 \\
{\bf 49 } & $\sim 0.0$ & 36.7 & 22.2 & 14.2 & 10.1 & 8.2 & 5.5 & 5.6 & 4.2 \\
 \hline
\end{tabular}
\end{center}
\end{document}